\newcommand{\N}{\mathbb{N}}                     
\newcommand{\Z}{\mathbb{Z}}                     
\newcommand{\R}{\mathbb{R}}                     
\newcommand{\T}{\mathbb{T}}                     
\newcommand{\set}[2]{\left\{{#1}\mid{#2}\right\}}       
\newtheorem{mainthm}{\sc Theorem}           
\newtheorem{thm}{\sc Theorem}[section]               
\newtheorem*{thm*}{\sc Theorem}               
\newtheorem{cor}[thm]{\sc Corollary}        
\newtheorem*{cor*}{\sc Corollary}        
\newtheorem{lem}[thm]{\sc Lemma}            
\newtheorem{prop}[thm]{\sc Proposition}     
\newtheorem{defn}[thm]{\sc Definition}      
\newtheorem{rem}[thm]{\sc Remark}           
\newtheorem{ex}[thm]{\sc Example}           
\title{Chain recurrence, chain transitivity, Lyapunov functions and rigidity of Lagrangian submanifolds of optical hypersurfaces}
\author{\hspace{-1cm}{\sc Alberto Abbondandolo${\, }^{1}$\quad Olga Bernardi${\,
}^{2}$ \quad Franco Cardin${\,
}^{2}$                  }
\vspace{0.3cm}
\\
\hspace{-1cm}${\ }^{1}$ Ruhr Universit\"at Bochum - Fakult\"at f\"ur Mathematik,
\\
\hspace{-1cm}Geb\"aude NA 4/33  \ D-44801 Bochum, Germany
\\ 
\\
\hspace{-1cm}${\ }^{2}$ Dipartimento di Matematica,
Universit\`a di Padova,\\
\hspace{-1cm}Via Trieste, 63 - 35121 Padova, Italy
\\ 
}
\date{ }
\begin{document}

\maketitle

\begin{abstract}
\noindent The aim of this paper is twofold. On the one hand, we discuss the notions of strong chain recurrence and strong chain transitivity for flows on metric spaces, together with their characterizations in terms of rigidity properties of Lipschitz Lyapunov functions. This part extends to flows some recent results for homeomorphisms of Fathi and Pageault. 
On the other hand, we use these characterisations to revisit the proof of a theorem of Paternain, Polterovich and Siburg concerning the inner rigidity of a Lagrangian submanifold $\Lambda$ contained in an optical hypersurface of a cotangent bundle, under the assumption that the dynamics on $\Lambda$ is strongly chain recurrent. We also prove an outer rigidity result for such a  Lagrangian submanifold $\Lambda$, under the stronger assumption that the dynamics on $\Lambda$ is strongly chain transitive.
\end{abstract}

\section*{Introduction}
Let $\psi=\{\psi_t\}_{t\in \R}$ be a (continuous) flow on the metric space $(X,d)$. 
We recall that a {\em strong $(\epsilon,T)$-chain} from $x$ to $y$ is a finite sequence $(x_i,t_i)_{i=1,\dots,n}$ such that $t_i\geq T$ for every $i$, $x_1=x$ and, setting $x_{n+1}:=y$, we have
\[
\sum_{i=1}^n d(\psi_{t_i}(x_i),x_{i+1}) < \epsilon.
\]
The flow $\psi$ is said to be {\em strongly chain recurrent} if for every $x\in X$, every $\epsilon>0$ and every $T\geq 0$ there exists a strong $(\epsilon,T)$-chain from $x$ to $x$. The flow $\psi$ is said to be {\em strongly chain transitive} if for every $x,y\in X$, every $\epsilon>0$ and every $T\geq 0$ there exists a strong $(\epsilon,T)$-chain from $x$ to $y$. Strong chain transitivity is a strictly stronger condition than strong chain recurrence. These notions sharpen the usual notions of chain recurrence and chain transitivity, in which one only requires each distance $d(\psi_{t_i}(x_i),x_{i+1})$ to be smaller than $\epsilon$. To the best of our knowledge, strong $(\epsilon,T)$-chains appear for the first time in the work of Easton \cite{eas78}. 

A function $h:X \rightarrow \R$ is said to be a
{\em Lyapunov function} for the flow $\psi$ on $X$ if $h\circ \psi_t \leq h$ for every $t\geq 0$, and is said to be a {\em first integral} if $h\circ \psi_t = h$ for every $t\in \R$.
Under a mild Lipschitz regularity assumption on the flow, strong chain recurrence and strong chain transitivity can be characterized in terms of rigidity properties of Lipschitz continuous Lyapunov functions. Indeed, we shall prove the following result:

\begin{mainthm}
\label{main1}
Let $\psi$ be a flow on the metric space $(X,d)$ such that $\psi_t$ is Lipschitz continuous for every $t\geq 0$, uniformly for $t$ in compact subsets of $[0,+\infty)$. Then:
\begin{enumerate} [(i)] 
\item $\psi$ is strongly chain recurrent if and only if every Lipschitz continuous Lyapunov function is a first integral.
\item $\psi$ is strongly chain transitive if and only if every Lipschitz continuous Lyapunov function is constant.
\end{enumerate}
\end{mainthm}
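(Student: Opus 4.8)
The plan is to base everything on the \emph{chain cost}
\[
c_T(x,y) := \inf\Big\{ \sum_{i=1}^n d(\psi_{t_i}(x_i),x_{i+1}) : n\ge 1,\ t_i\ge T,\ x_1=x,\ x_{n+1}=y\Big\},
\]
so that $\psi$ is strongly chain recurrent iff $c_T(x,x)=0$ for all $x$ and all $T>0$, and strongly chain transitive iff $c_T(x,y)=0$ for all $x,y$ and all $T>0$. For the two ``only if'' implications I would argue by telescoping. If $h$ is $L$-Lipschitz and Lyapunov, then for any chain $(x_i,t_i)$ with $t_i\ge T>0$, writing $D_i:=h(x_i)-h(\psi_{t_i}(x_i))\ge 0$, one has $h(x_{i+1})\le h(\psi_{t_i}(x_i))+L\,d(\psi_{t_i}(x_i),x_{i+1})=h(x_i)-D_i+L\,d(\psi_{t_i}(x_i),x_{i+1})$, and summing gives $h(x_{n+1})-h(x_1)\le -\sum_i D_i+L\sum_i d(\psi_{t_i}(x_i),x_{i+1})$. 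For transitivity this yields $h(y)\le h(x)+L\epsilon$ for every $\epsilon$, hence $h(y)\le h(x)$, and by symmetry $h$ is constant. For recurrence I take the loop $x_{n+1}=x_1=x$, getting $\sum_i D_i<L\epsilon$; if $h$ were not a first integral there would be $z$ and $\tau>0$ with $h(\psi_\tau(z))<h(z)$, and taking the chain to start at $z$ with $t_1\ge\tau$ forces $D_1\ge h(z)-h(\psi_\tau(z))>0$ (using that $t\mapsto h(\psi_t(z))$ is non-increasing), contradicting $\sum_i D_i<L\epsilon$ for small $\epsilon$.

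For the two ``if'' implications I argue contrapositively and must \emph{produce} a Lipschitz Lyapunov function. Suppose there are $x_0,y_0$, $\epsilon_0>0$ and $T_0>0$ (which we may take positive, since $c_T$ is non-decreasing in $T$) with $c_{T_0}(x_0,y_0)\ge\epsilon_0$, taking $y_0=x_0$ in the recurrence case. Set $h:=c_{T_0}(x_0,\cdot)$. Changing only the last endpoint of a chain shows $h$ is $1$-Lipschitz; appending a step $(x,s)$ with $s\ge T_0$, which adds zero cost, shows $h\circ\psi_s\le h$ for $s\ge T_0$; and the one-step chain $(x_0,r)$ shows $h(\psi_r(x_0))=0$ for every $r\ge T_0$. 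The only defect is that $h$ is Lyapunov merely for times $\ge T_0$, not for all $t\ge 0$, and repairing this is the crux of the argument.

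To fix the small-time behaviour I set $g:=\sup_{s\ge 0} h\circ\psi_s$, which is manifestly a Lyapunov function for all $t\ge 0$ because $g\circ\psi_t=\sup_{r\ge t} h\circ\psi_r\le g$. The key observation is that, since $h\circ\psi_s\le h$ for $s\ge T_0$, the supremum is already attained on the compact interval $[0,T_0]$, that is $g=\sup_{0\le s\le T_0} h\circ\psi_s$. Hence $g$ is finite, and since each $h\circ\psi_s$ has Lipschitz constant $\mathrm{Lip}(\psi_s)$, the uniform bound $K:=\sup_{0\le s\le T_0}\mathrm{Lip}(\psi_s)<\infty$ furnished by the hypothesis makes $g$ a $K$-Lipschitz Lyapunov function. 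This reduction to a compact time interval is exactly where the uniform Lipschitz assumption is indispensable, and I expect it to be the main obstacle: without it $g$ need be neither finite nor Lipschitz.

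It remains to see that $g$ has the required defect. One computes $g(\psi_{T_0}(x_0))=\sup_{r\ge T_0}h(\psi_r(x_0))=0$, while $g(y_0)\ge h(y_0)=c_{T_0}(x_0,y_0)\ge\epsilon_0>0$. In the transitivity case this already shows $g$ is non-constant, contradicting that every Lipschitz Lyapunov function is constant. In the recurrence case $y_0=x_0$, so the two distinct values $g(x_0)\ge\epsilon_0>0=g(\psi_{T_0}(x_0))$ are taken on a single orbit, whence $g$ is not a first integral, contradicting that every Lipschitz Lyapunov function is a first integral. This establishes both contrapositives and completes the proof of (i) and (ii).
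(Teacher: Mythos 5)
Your proposal is correct and follows essentially the same route as the paper: the telescoping estimate for the easy implications is the paper's Propositions \ref{SCR} and \ref{ppp1}, and your cost $c_T$ and the repaired function $g=\sup_{s\ge 0}h\circ\psi_s$ coincide exactly with the paper's $L_T$ and $h(y)=\max_{s\in[0,T]}\tilde h(\psi_s(y))$ in Propositions \ref{pierrecont} and \ref{ppp2}, including the use of the uniform Lipschitz hypothesis on $[0,T]$. The only cosmetic difference is that you define the correction as a supremum over all $s\ge 0$ and then observe it reduces to the compact interval, whereas the paper takes the maximum over $[0,T]$ from the start.
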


The Lipschitz regularity assumption is satisfied, for instance, by the flow of a Lipschitz continuous vector field on a compact manifold. 
See Propositions \ref{SCR}, \ref{pierrecont}, Theorem \ref{pierrethm}, Propositions \ref{ppp1} and \ref{ppp2} below for more precise results. See also Examples \ref{cantor} and \ref{cantor1} for a discussion on the optimality of the assumptions and 
Remark \ref{remconley} for a comparison with Conley's well known result about the characterization of the chain recurrent set in terms of continuous Lyapunov functions. Statement (i) is the flow analogue of Fathi's and Pageault's results for homeomorphisms (see \cite{fp15} and \cite{pag11}[Section 2.4]). The proof of the non trivial implications in both (i) and (ii) - that is, the construction of non-trivial Lipschitz continuous Lyapunov functions for flows which are either not strongly chain recurrent or not strongly chain transitive - uses the techniques introduced by Fathi and Pageault. Statement (ii) easily implies that flows on compact metric spaces which are ergodic with respect to a measure which is positive on every non-empty open set are strongly chain transitive, see Proposition \ref{ergodic} below. See also \cite{eas78,zhe98} for the relationship between strong chain transitivity and Lipschitz ergodicity (in the case of homeomorphisms).

In order to describe the results about rigidity of Lagrangian submanifolds, we need to recall some definitions. We consider the cotangent bundle $T^*M$ of a closed smooth manifold $M$ endowed with its standard Liouville one-form $\lambda$ and symplectic two-form $\omega=d\lambda$, which in local cotangent coordinates $(x^1,\dots,x^n, y_1,\dots,y_n)$ have the expressions
\[
\lambda = \sum_{j=1}^n y_j\, dx_j, \qquad \omega = \sum_{j=1}^n dy_j \wedge dx^j.
\]
A Lagrangian submanifold $\Lambda$ of $T^*M$ which is smoothly isotopic to the zero section through a path of Lagrangian submanifolds carries a {\em Liouville class} $\mathrm{Liouville}(\Lambda)$, which is an element of the first De Rham cohomology group $H^1(M)$. This class is defined by restricting the one-form $\lambda$ to $\Lambda$ and by pulling it back to $M$ by using the Lagrangian isotopy (see Section \ref{liouvillesec} for more details).   

A smooth function $H: T^*M \rightarrow \R$ is said to be a {\em Tonelli Hamiltonian} if it is fiberwise superlinear and its fiberwise second differential is everywhere positive definite.
An optical hypersurface in $T^*M$ is a smooth hypersurface $\Sigma\subset T^*M$ which projects surjectively onto $M$ and can be seen as a regular level set of a smooth Tonelli Hamiltonian $H: T^*M \rightarrow \R$. The optical surface $\Sigma = H^{-1}(c)$ is invariant with respect to the Hamiltonian flow $\psi^H$ of $H$. If one changes the defining Hamiltonian $H$, the flow on $\Sigma$ changes by a time reparametrization. The dynamical concepts which we consider here - mainly strong chain recurrence and strong chain transitivity - are invariant with respect to time reparametrizations, and hence depend only on the geometry of $\Sigma$. However, for sake of concreteness we shall fix once and for all a defining Hamiltonian $H$ and deal with its Hamiltonian flow $\psi^H$. 

A Lagrangian submanifold $\Lambda$ which is contained in the hypersurface $\Sigma:= H^{-1}(c)$ is automatically invariant with respect to $\psi^H$. The example of Ma\~n\'e's Hamiltonians shows that the dynamics on $\Lambda$ can be fully arbitrary: if $Y$ is any smooth vector field on $M$, the Tonelli Hamiltonian
\[
H: T^*M \rightarrow \R, \qquad H(x,y) := \frac{1}{2} \|y\|^2 + \langle y, Y(x) \rangle,
\]
where $\|\cdot\|$ denotes the norm on $T^*M$ induced by a Riemannian metric on $M$ and $\langle \cdot,\cdot \rangle$ denotes the duality pairing, has zero as a regular value, and the restriction of the flow $\psi^H$ to the zero section $\Lambda \subset \Sigma:= H^{-1}(0)$ is the flow of $Y$.

Given an optical hypersurface $\Sigma= H^{-1}(c)$, we denote by
\[
U_{\Sigma} := \set{ z\in T^*M}{H(z)<c}
\]
the precompact open subset which is bounded by $\Sigma$. In \cite{pps03}, Paternain, Polterovich and Siburg proved the following rigidity result for Lagrangian submanifolds of optical hypersurfaces having a strongly chain recurrent dynamics:

\begin{mainthm}[\cite{pps03}, Theorem 5.2]
\label{main2}
Let $\Sigma=H^{-1}(c) \subset T^*M$ be an optical hypersurface as above. Let $\Lambda$ be  a Lagrangian submanifold of $T^* M$ smoothly Lagrangian-isotopic to the zero section and contained in $\Sigma$. Assume that the restriction of the Hamiltonian flow $\psi^H$ to $\Lambda$
is strongly chain recurrent. Then if $K$ is a  Lagrangian submanifold smoothly isotopic to the zero section contained in $\overline{U_{\Sigma}}$ and having the same Liouville class of $\Lambda$, then necessarily $K=\Lambda$.
\end{mainthm}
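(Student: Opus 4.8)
The plan is to manufacture out of the ambient Lagrangian $K$ a Lipschitz Lyapunov function for the restricted flow $\psi^H|_\Lambda$, to promote it to a first integral via Theorem \ref{main1}(i), and finally to read off $K=\Lambda$ from the strict fiberwise convexity of $H$. Note first that, since $H$ is smooth and $\overline{U_\Sigma}$ is compact, $\psi^H|_\Lambda$ is a smooth flow on the compact manifold $\Lambda$, hence satisfies the uniform Lipschitz hypothesis of Theorem \ref{main1}, so that the characterization is applicable.

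The preliminary step is to represent $\Lambda$ and $K$ by sections of $T^*M$. Using the optical (fiberwise strictly convex) structure of $\Sigma$, I would show that $\pi|_\Lambda:\Lambda\to M$ is a diffeomorphism, so that $\Lambda=\{(x,\sigma_\Lambda(x)):x\in M\}$ for a smooth closed one-form $\sigma_\Lambda$ with $H(x,\sigma_\Lambda(x))=c$. Applying a graph selector to $K$ produces a (in general only Lipschitz) section $\sigma_K$ with $(x,\sigma_K(x))\in K$ and $H(x,\sigma_K(x))\le c$, the latter because $K\subset\overline{U_\Sigma}=\{H\le c\}$. Equality of the Liouville classes of $\Lambda$ and $K$ means precisely that $\sigma_K-\sigma_\Lambda$ is exact, say $\sigma_K-\sigma_\Lambda=du$ for a function $u:M\to\R$ which is Lipschitz (and smooth when $K$ too is a graph). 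I then set $h:=u\circ\pi:\Lambda\to\R$.

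The heart of the matter is that $h$ is a Lyapunov function. Along an orbit of $\psi^H$ on $\Lambda$ the base point moves with velocity $\dot x=\partial_y H(x,\sigma_\Lambda(x))$, so at points of differentiability
\[
\frac{d}{dt}\Big|_{t=0} h(\psi^H_t(z)) = du_x(\dot x) = \big(\sigma_K(x)-\sigma_\Lambda(x)\big)\cdot\partial_y H\big(x,\sigma_\Lambda(x)\big)
\]
(with Hamilton's equations $\dot x=\partial_y H$; otherwise replace $u$ by $-u$). Fiberwise convexity of $H$ gives $H(x,\sigma_K)\ge H(x,\sigma_\Lambda)+\partial_y H(x,\sigma_\Lambda)\cdot(\sigma_K-\sigma_\Lambda)$, and since $H(x,\sigma_\Lambda)=c$ while $H(x,\sigma_K)\le c$ the right-hand side above is bounded by $H(x,\sigma_K(x))-c\le 0$. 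Thus $h$ is nonincreasing along the flow, i.e. $h\circ\psi^H_t\le h$ for $t\ge 0$ — which is the integrated inequality one really needs, valid even in the merely Lipschitz case. Now strong chain recurrence of $\psi^H|_\Lambda$ together with Theorem \ref{main1}(i) forces $h$ to be a first integral, so the derivative above vanishes identically; the convexity chain then collapses to $c\le H(x,\sigma_K(x))\le c$ with equality in the convexity estimate. Strict positive definiteness of the fiberwise Hessian upgrades equality to $\sigma_K(x)=\sigma_\Lambda(x)$ for every $x$, whence $\Lambda\subseteq K$ and, both being closed $n$-manifolds, $K=\Lambda$.

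The main obstacle is the preliminary step: representing $\Lambda$ and $K$ by sections and controlling the regularity of the comparison function $u$. Proving that $\Lambda$ has no caustics, so that it is genuinely a graph, is where the optical structure must be used in an essential way; and $K$ need not be a graph at all, so in general $u$ is only Lipschitz rather than smooth. This is exactly why the \emph{Lipschitz} — as opposed to smooth — version of the Lyapunov characterization in Theorem \ref{main1} is indispensable here: the naive smooth comparison function need not exist, whereas the Lipschitz one always does, and strong chain recurrence still rigidifies it into a first integral.
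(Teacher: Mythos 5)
Your proposal follows essentially the same route as the paper's proof of Theorem \ref{bound-rig}: represent $\Lambda$ as the graph of a closed one-form, apply a graph selector (Theorem \ref{GS}) to (a fiberwise translate of) $K$, use fiberwise convexity of $H$ to turn the resulting Lipschitz primitive $u$ into a Lyapunov function for $\psi^H|_{\R\times\Lambda}$, invoke strong chain recurrence (Proposition \ref{SCR}) to force it to be a first integral, and let strict fiberwise convexity collapse the inequalities to $K=\Lambda$. Your ``remainder'' in the convexity estimate is exactly the paper's non-negative function $F$ in the normal form $\tilde H(x,y)=\langle y,Y(x)\rangle+F(x,y)$, so the central inequality is identical.

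Two steps that you assert are precisely where the paper needs genuine input, and they should not be presented as routine. First, the claim that $\pi|_{\Lambda}$ is a diffeomorphism, i.e.\ that the invariant Lagrangian $\Lambda\subset\Sigma$ is the graph of a closed one-form, is Arnaud's generalization of Birkhoff's theorem (Theorem \ref{arnaud}); it is a substantial theorem in its own right, not a consequence of convexity one ``would show'' in passing, and without it the reduction to the zero section does not get off the ground. You correctly flag this as the main obstacle, but it remains unresolved in your write-up. Second, the passage from the pointwise inequality $du_x[Y(x)]\le 0$, valid only at differentiability points of $u$, to the integrated statement $h\circ\psi^H_t\le h$ is not automatic for a merely Lipschitz $u$: an orbit may lie entirely in the Lebesgue-null set where $u$ fails to be differentiable, and the derivative of the Lipschitz function $t\mapsto h(\psi^H_t(z))$ at a differentiability time need not equal $du[Y]$ there, so one cannot simply integrate along the orbit. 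The paper closes this gap with an averaging and Fubini argument (Lemma \ref{lipfirstint}); declaring the integrated inequality ``valid even in the merely Lipschitz case'' without such an argument leaves a real hole. With these two points supplied, your argument coincides with the paper's.
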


In Section \ref{rigiditysec} we shall revisit the proof of this result, stressing the role of Lipschitz continuous Lyapunov functions and of the characterization of strong chain transitivity given by Theorem \ref{main1} (i). We shall also show that this rigidity result might fail if one assumes only chain recurrence and discuss the connections with Aubry-Mather theory.

The above theorem says that, when the dynamics on $\Lambda$ is strongly chain recurrent, one cannot find another Lagrangian submanifold in $\overline{U_{\Sigma}}$ having the same Liouville class. However, it is in general possible to find a Lagrangian submanifold with the same Liouville class outside of $U_{\Sigma}$, and actually it is also possible to obtain such a submanifold by an analytic deformation in the complement of $U_{\Sigma}$ (see Examples \ref{no rigidity} and \ref{terzo esempio} below). The following results says that the latter fact is not possible if we assume more on the dynamics on $\Lambda$, namely strong chain transitivity:

\begin{mainthm} 
\label{main3}
Let $\Sigma=H^{-1}(c) \subset T^*M$ be an optical hypersurface as above. Let $\Lambda$ be  a Lagrangian submanifold of $T^* M$ smoothly Lagrangian-isotopic to the zero section and contained in $\Sigma$. Assume that the restriction of the Hamiltonian flow $\psi^H$ to $\Lambda$
is strongly chain transitive. Let $\{\Lambda_r\}_{r\in [0,1]}$ be an analytic one-parameter family of smooth Lagrangian submanifolds having the same Liouville class of $\Lambda$, such that $\Lambda_0=\Lambda$ and $\Lambda_r \subset U_{\Sigma}^c$ for all $r\in [0,1]$. Then $\Lambda_r = \Lambda$ for all $r\in [0,1]$.
\end{mainthm}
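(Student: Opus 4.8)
The plan is to linearise the constraint $\Lambda_r\subset U_{\Sigma}^c=\{H\ge c\}$ along the family and to recognise the leading-order infinitesimal deformation as the differential of a Lipschitz Lyapunov function for $\psi^H|_\Lambda$, which Theorem \ref{main1}(ii) then forces to be constant. First I would put the family in normal form near $\Lambda$. By Weinstein's Lagrangian neighbourhood theorem there is a symplectomorphism $\Psi$ from a neighbourhood of $\Lambda$ in $(T^*M,\omega)$ onto a neighbourhood of the zero section in $(T^*\Lambda,\omega_{\mathrm{can}})$ sending $\Lambda$ to the zero section. For $r$ near a fixed parameter value, $\Lambda_r$ is a $C^1$-small graph, and since all the $\Lambda_r$ are Lagrangian with the same Liouville class as $\Lambda_0=\Lambda$, each is the graph of an \emph{exact} one-form: $\Psi(\Lambda_r)=\{(q,dg_r(q)):q\in\Lambda\}$ for a smooth $g_r:\Lambda\to\R$ with $g_0$ constant. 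The Lagrangian condition makes the defining form closed, and constancy of the Liouville class (in the sense of Section \ref{liouvillesec}) makes its cohomology class vanish, hence the form is exact; establishing this faithful translation is a preliminary step I would carry out carefully. Writing $\tilde H:=H\circ\Psi^{-1}$, the inclusion $\Lambda\subset\Sigma$ becomes $\tilde H(q,0)=c$ for all $q$, so $\partial_q\tilde H(q,0)=0$ and the Hamiltonian vector field on the zero section is $X_{\tilde H}(q,0)=\partial_p\tilde H(q,0)\,\partial_q$, i.e. exactly the generator of $\psi^H|_\Lambda$. The analytic hypothesis enters precisely here, as the statement that $r\mapsto dg_r$ is a real-analytic curve of one-forms on $\Lambda$.

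Suppose, for contradiction, that $dg_r\not\equiv0$ near $r=0$. By analyticity there is a smallest $k\ge1$ with $dg_k:=\tfrac{1}{k!}\,\partial_r^k dg_r|_{r=0}\not\equiv0$, so that $dg_r=r^k\,dg_k+O(r^{k+1})$. Taylor-expanding $\tilde H$ in the fibre variable and substituting gives
\[
\tilde H\big(q,dg_r(q)\big)-c = r^k\,\big\langle \partial_p\tilde H(q,0),\,dg_k(q)\big\rangle + O(r^{k+1}) = r^k\, dg_k\big(X_H(q)\big) + O(r^{k+1}),
\]
using the identification of $\partial_p\tilde H(q,0)$ with $X_H$ along $\Lambda$. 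The constraint $\tilde H(q,dg_r(q))\ge c$ for small $r>0$ forces the leading coefficient to be non-negative, i.e. $dg_k(X_H)\ge0$ on $\Lambda$; equivalently $g_k$ is non-decreasing along $\psi^H$, so $-g_k$ is a Lyapunov function. Being smooth on the compact manifold $\Lambda$ it is Lipschitz, and since $\psi^H|_\Lambda$ is strongly chain transitive, Theorem \ref{main1}(ii) makes $-g_k$, hence $g_k$, constant. Then $dg_k\equiv0$, contradicting minimality of $k$. Thus $dg_r$ is flat at $r=0$, and real-analyticity of $r\mapsto dg_r$ yields $dg_r\equiv0$, i.e. $\Lambda_r=\Lambda$, for all small $r$.

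Finally I would upgrade local rigidity to the whole interval by a connectedness argument. Set $A:=\{r\in[0,1]:\Lambda_{r'}=\Lambda\text{ for all }r'\in[0,r]\}$ and $r_*:=\sup A$; by continuity $\Lambda_{r_*}=\Lambda$. If $r_*<1$, I would repeat the normal-form analysis in a Weinstein neighbourhood of $\Lambda=\Lambda_{r_*}$, now expanding in powers of $(r-r_*)$ and using the \emph{two-sided} constraint $\tilde H(q,dg_r(q))\ge c$. If the leading order $k$ is even the sign analysis is unchanged and $g_k$ is again a Lyapunov function; if $k$ is odd the two-sided inequality forces $dg_k(X_H)\equiv0$, so $g_k$ is a first integral and in particular a Lyapunov function. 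In either case Theorem \ref{main1}(ii) makes $g_k$ constant, contradicting minimality; hence $dg_r$ is flat at $r_*$ and, by analyticity, $\Lambda_r=\Lambda$ in a two-sided neighbourhood of $r_*$, contradicting the definition of $r_*$. Therefore $r_*=1$ and $\Lambda_r=\Lambda$ for every $r\in[0,1]$.

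The heart of the matter, and the main obstacle, is the passage from the infinitesimal rigidity of $\Lambda$ to rigidity of the whole analytic family: the first-order computation only controls the leading non-vanishing jet, and one must check that at every order the leading deformation is again governed by a genuine Lipschitz Lyapunov function, both at the endpoint $r=0$ and at interior values $r_*$, where the sign of $(r-r_*)^k$ must be handled with care. A secondary, more technical point that I would settle before everything else is the honest translation of the geometric hypotheses into the Weinstein model—namely that invariance of $\Lambda$ under $\psi^H$ identifies $\partial_p\tilde H(q,0)$ with the generator of $\psi^H|_\Lambda$, and that constancy of the Liouville class promotes the defining closed forms $dg_r$ to exact ones, which is what renders $g_k$ a well-defined function on $\Lambda$ and permits the application of Theorem \ref{main1}(ii).
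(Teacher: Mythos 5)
Your argument is correct and its engine is the same as the paper's: expand the one-sided constraint $H\ge c$ along the analytic family, observe that the leading non-vanishing Taylor coefficient of the deformation is (up to sign) a smooth, hence Lipschitz, Lyapunov function for $\psi^H|_{\Lambda}$, and invoke the characterization of strong chain transitivity (Theorem \ref{main1}(ii); in fact only the easy direction, Proposition \ref{ppp1}, is needed) to force it to be constant, contradicting minimality of the order. The one substantive difference is the normal form. The paper first applies Arnaud's Theorem \ref{arnaud} to write $\Lambda=\theta(M)$ for a closed one-form $\theta$ and then conjugates by the global fiberwise translation of Lemma \ref{Lio-shi}, which reduces to $\Lambda=\mathcal{O}$, $c=0$ and $H(x,y)=\langle y,Y(x)\rangle+F(x,y)$ with $F\ge 0$ vanishing to first order on the zero section; you instead use a Weinstein neighbourhood of $\Lambda$. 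Both reductions work: yours avoids Arnaud's (nontrivial) theorem but requires the bookkeeping you rightly flag, namely that the closed form $\Psi^*\lambda_{\mathrm{can}}-\lambda$ converts ``same Liouville class as $\Lambda$'' into exactness of the graphs $dg_r$ (its cohomology class is pinned down by evaluating at $r=0$, where $\Psi(\Lambda)$ is the zero section); the paper's route gets the sign information from the explicit nonnegativity of $F$, whereas you get it from the vanishing of $\partial_q\tilde H(q,0)$ and the quadratic Taylor remainder, which is equally valid. Two smaller remarks. First, your parity analysis at an interior parameter $r_*$ never actually arises: once $\Lambda_r=\Lambda$ on a nondegenerate interval, the identity theorem for the real-analytic curve $r\mapsto dg_r$ already gives $\Lambda_r=\Lambda$ on all of $[0,1]$ --- a globalization step the paper leaves implicit and which you are right to address, just more simply than you do. Second, your choice to extract the leading order of $dg_r$ rather than of $g_r$ is the clean way to guarantee that the leading potential is genuinely non-constant; the paper's statement that its leading coefficient $v$ is non-constant tacitly assumes the analogous normalization.
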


The proof uses the characterization of strong chain transitivity in terms of Lyapunov functions from Theorem \ref{main1} (ii). 

\paragraph{Acknowledgements.} The first author is partially supported by the DFG grant AB 360/1-1. The second author is partially supported by the GNFM project ``Weak KAM Theory: dynamical aspects and applications''.

\section{Chain recurrence for flows in metric spaces}
\label{recsec}

Throughout the whole section, $\psi: \R \times X \rightarrow X$ denotes a continuous flow on a metric space $(X,d)$. We use the standard notation $\psi(t,x) = \psi_t(x)$. We begin by recalling some standard definitions.

\begin{defn}{\em (Chain recurrence)} 
\par\noindent
(i) Given $x,y\in X$, $\epsilon > 0$ and $T \geq 0$, a $(\epsilon,T)$-chain from $x$ to $y$  is a finite sequence $(x_i,t_i)_{i=1,\dots, n} \subset X \times \mathbb{R}$ such that  $t_i \geq T$ for all $i$,  $x_1 = x$ and setting $x_{n+1} := y$, we have
\[
d(\psi_{t_i}(x_i),x_{i+1}) < \epsilon \qquad \forall i = 1, \dots, n.
\]
\par\noindent
(ii) A point $x \in X$ is said to be chain recurrent if for all $\epsilon > 0$ and $T \geq 0$, there exists a $(\epsilon,T)$-chain from $x$ to $x$.  The set of chain recurrent points is denoted by $\mathcal{CR}(\psi)$.  
\par\noindent 
(iii) The flow $\psi$ on $X$ is said to be chain recurrent if $\mathcal{CR}(\psi)=X$.
\end{defn}

\begin{defn}{\em (Strong chain recurrence)} \label{scrdef}  \par\noindent
(i) Given $x,y\in X$, $\epsilon > 0$ and $T \geq 0$, a strong $(\epsilon,T)$-chain from $x$ to $y$ is a finite sequence $(x_i,t_i)_{i=1,\dots, n} \subset X \times \mathbb{R}$ such that $t_i \geq T$ for all $i$,  $x_1 = x$ and setting $x_{n+1} := y$, we have
\[
\sum_{i=1}^n d(\psi_{t_i}(x_i),x_{i+1}) < \epsilon.
\]
(ii) A point $x \in X$ is said to be strongly chain recurrent if for all $\epsilon > 0$ and $T \geq 0$, there exists a strong $(\epsilon,T)$-chain from $x$ to $x$. The set of strongly chain recurrent points is denoted by $\mathcal{SCR}(\psi)$.  
\par\noindent 
(iii) The flow $\psi$ on $X$ is said to be strongly chain recurrent if $\mathcal{SCR}(\psi)=X$.
\end{defn}

Clearly, the set of fixed points, and more generally periodic points, is contained in the strongly chain recurrent set $\mathcal{SCR}(\psi)$, which is contained in the chain recurrent set $\mathcal{CR}(\psi)$. Both $\mathcal{CR}(\psi)$ and $\mathcal{SCR}(\psi)$ are easily seen to be invariant closed sets (see e.g.\ \cite{con88,aki93,zhe00}). 

Now we recall the notions of Lyapunov function and first integral: the latter notion is unambiguous, the former one has different definitions in the literature. Here we adopt the following one.

\begin{defn}{\em (Lyapunov function and neutral set)}
\par\noindent
A function $h:X\rightarrow \R$ is a Lyapunov function for $\psi$ if $h\circ \psi_t \leq h$ for every $t\geq 0$. The neutral set $\mathcal{N}(h)$ of a Lyapunov function $h$ is the set of all points $x\in X$ such that the function $t\mapsto h\circ \psi_t (x)$ is constant.
\end{defn}

By the group property of $\psi$, the function $h:X\rightarrow \R$ is Lyapunov if and only if for every $x\in X$ the function $t\mapsto h\circ \psi_t(x)$ is monotonically decreasing on $\R$. The neutral set of a Lyapunov function $h$ is invariant, and it is closed if $h$ is continuous.

\begin{defn}{\em (First integral)}
\par\noindent
A function $h:X\rightarrow \R$ is a first integral for $\psi$ if $h\circ \psi_t = h$ for every $t\in \R$.
\end{defn}

In other words, $h$ is a first integral if and only if $h$ is a Lyapunov function with $\mathcal{N}(h)=X$. The following lemma is useful in order to characterize Lyapunov functions and first integrals in the case of Lipschitz regularity.

\begin{lem} 
\label{lipfirstint}
Let $M$ be a manifold and $V$ a locally Lipschitz continuous vector field on $M$, inducing a complete flow $\psi:\R \times M \rightarrow M$. Let $h:M \rightarrow \R$ be a locally Lipschitz continuous function. Then:
\begin{enumerate}[(i)]
\item $h$ is a Lyapunov function for $\psi$ if and only if $dh\circ V\leq 0$ almost everywhere.
\item $h$  is a first integral for $\psi$ if and only if $dh\circ V= 0$ almost everywhere.
\end{enumerate}
\end{lem}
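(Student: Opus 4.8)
The plan is to reduce both statements to the behaviour of the one-variable function $g_x(t):=h(\psi_t(x))$ and to relate its derivative in $t$ to the almost-everywhere defined function $dh\circ V$ on $M$. Since $V$ is locally Lipschitz, the flow map $\psi_t$ is a locally bi-Lipschitz homeomorphism with inverse $\psi_{-t}$, and $t\mapsto \psi_t(x)$ is $C^1$ with $\partial_t \psi_t(x)=V(\psi_t(x))$. Hence each $g_x$ is locally Lipschitz, so absolutely continuous, and $g_x(t)=g_x(0)+\int_0^t g_x'(s)\,ds$. By Rademacher's theorem $h$ is differentiable almost everywhere, and at every point where $h$ is differentiable the chain rule (valid because the outer map is differentiable) gives $g_x'(t)=dh_{\psi_t(x)}(V(\psi_t(x)))=(dh\circ V)(\psi_t(x))$ for a.e.\ $t$.

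For the forward implication of (i) I would argue pointwise. If $h$ is a Lyapunov function, then for any $x$ at which $dh_x$ exists one has $g_x(t)\le g_x(0)$ for $t\ge 0$, so the right derivative of $g_x$ at $0$ is $\le 0$; by the chain rule this derivative equals $dh_x(V(x))$, whence $dh_x(V(x))\le 0$. Since this holds at almost every $x$ by Rademacher's theorem, we obtain $dh\circ V\le 0$ a.e. This is the routine direction.

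The reverse implication is the heart of the matter, and the main obstacle is that the orbit of a fixed $x$ may lie entirely in the null set $N\subset M$ where $dh$ fails to exist or $dh(V)>0$, so one cannot directly integrate $g_x'$ along orbits. To overcome this I would enlarge $N$ to a Borel null set and consider $\Phi(t,x):=\psi_t(x)$. For each fixed $t$ the slice $\{x:\psi_t(x)\in N\}=\psi_{-t}(N)$ is null, since $\psi_{-t}$ is locally Lipschitz and Lipschitz maps send null sets to null sets. Fubini's theorem, applied with a smooth volume on $M$, then shows $\Phi^{-1}(N)$ is null in $\mathbb{R}\times M$, so for almost every $x$ the set of times with $\psi_t(x)\in N$ is Lebesgue-null. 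For such a good $x$ we get $g_x'(t)=(dh\circ V)(\psi_t(x))\le 0$ for a.e.\ $t$, and absolute continuity of $g_x$ yields $h(\psi_t(x))\le h(x)$ for all $t\ge 0$.

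Finally, since the good points form a full-measure, hence dense, subset and both $h$ and $\psi_t$ are continuous, the inequality $h(\psi_t(x))\le h(x)$ passes to all $x$ and all $t\ge 0$ by approximation, proving $h$ is a Lyapunov function and completing (i). Statement (ii) I would deduce from (i) applied to $\psi$ and to the reversed flow generated by $-V$: indeed $h$ is a first integral if and only if it is a Lyapunov function for both flows, which by (i) means $dh\circ V\le 0$ and $dh\circ V\ge 0$ almost everywhere, i.e.\ $dh\circ V=0$ a.e. I expect the Fubini null-set step to be the only delicate point; the remainder is bookkeeping with Rademacher's theorem, the chain rule, and the fundamental theorem of calculus for absolutely continuous functions.
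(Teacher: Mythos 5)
Your proof is correct, and both directions of (i) as well as the reduction of (ii) to (i) (via the time-reversed flow, where the paper instead uses $-h$) are sound. The forward implication is identical to the paper's. For the reverse implication the underlying mechanism is the same --- Rademacher plus a Fubini argument to convert the spatial a.e.\ hypothesis into information along orbits --- but the packaging differs. The paper argues by contradiction: if $h(\psi_t(x))-h(x)=a>0$, Lipschitz continuity gives a quantitative lower bound $a-c\,d(y,x)$ on a small ball, so the integral of $h\circ\psi_t-h$ over that ball is positive; writing this integral as $\int_0^t\int_{B_\epsilon(x)} dh(\psi_s(y))[V(\psi_s(y))]\,d\mu(y)\,ds$ via Tonelli and the a.e.\ chain rule then forces it to be nonpositive. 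You instead show directly that for a.e.\ $x$ the orbit meets the bad set in a Lebesgue-null set of times, integrate $g_x'\le 0$ along such orbits, and extend to all $x$ by density and continuity. Your route has the merit of making explicit the step that the paper leaves implicit inside its iterated integral, namely that $\psi_{-t}$ is locally Lipschitz and hence preserves null sets, so that $dh$ exists at $\psi_s(y)$ for a.e.\ $(s,y)$; the paper's route avoids your final density argument and the need to enlarge $N$ to a Borel set for measurability, at the cost of an indirect contradiction setup. Both are complete proofs; no gap.
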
  

\begin{proof} (i)  Assume that $h$ is a Lyapunov function for $\psi$. Then we have
\[
dh\circ V(x) = \lim_{t\to 0}\frac{h\circ \psi_t (x)-h(x)}{t} \leq 0
\]
for every $x\in M$ which is a differentiability point for $h$. Being locally Lipschitz continuous, $h$ is almost everywhere differentiable, and hence $dh\circ V\leq 0$ almost everywhere.

Now we suppose that $dh\circ V\leq 0$ almost everywhere and assume by contradiction that $h$ is not a Lyapunov function. This means that there are $x\in X$ and $t>0$ such that
\[
a:= h(\psi_t(x)) - h(x) >0.
\]
Since $h$ and $\psi_t$ are locally Lipschitz continuous, we can find $r>0$ and $c\geq 0$ such that
\[
h(\psi_t(y)) - h(y) \geq a - c \, d(y,x) \qquad \forall y\in B_r(x),
\]
where $d$ is a distance function on $M$ induced by some Riemannian metric and $B_r(x)$ denotes the corresponding ball of radius $r$ centered at $x$. By integrating this inequality on a ball of radius $\epsilon\leq r$ we find
\[
\int_{B_{\epsilon}(x)} \bigl( h(\psi_t(y)) - h(y)\bigr) \, d\mu(y) \geq a \,\mu\bigl(B_{\epsilon}(x)\bigr) - c \int_{B_{\epsilon}(x)} d(y,x)\, d\mu(y) \geq (a-c\epsilon) \mu\bigl(B_{\epsilon}(x)\bigr),
\]
where $\mu$ denotes the measure on $M$ which is induced by the Riemannian metric. So for $\epsilon<a/c$ we have
\begin{equation}
\label{ass}
\int_{B_{\epsilon}(x)} \bigl( h(\psi_t(y)) - h(y)\bigr) \, d\mu(y) >0.
\end{equation}
On the other hand, using the fact that the function $s\mapsto h(\psi_s(y))$ is locally Lipschitz continuous and hence a.e.\ differentiable, Tonelli's theorem, the a.e.\ differentiability of $h$ and the chain rule for Lipschitz continuous maps, we obtain
\[
\begin{split}
\int_{B_{\epsilon}(x)} \bigl( h(\psi_t(y)) - h(y)\bigr) \, d\mu(y) &= \int_{B_{\epsilon}(x)} \left( \int_0^t \frac{d}{ds} h(\psi_s(y))\, ds \right) \, d\mu(y) \\ &= \int_0^t \left( \int_{B_{\epsilon}(x)}  \frac{d}{ds} h(\psi_s(y))\, d\mu(y) \right) \, ds \\ &= \int_0^t \left( \int_{B_{\epsilon}(x)}  dh(\psi_s(y)) [V(\psi_s(y))]\, d\mu(y) \right) \, ds.
\end{split}
\]
Since $dh\circ V\leq 0$ a.e., the latter quantity is non-positive, and this contradicts (\ref{ass}). This proves that $h$ is a Lyapunov function.

\noindent (ii) By (i), the fact that $dh\circ V$ vanishes almost everywhere is equivalent to the fact that both $h$ and $-h$ are Lyapunov functions, and hence to the fact that $h$ is a first integral.
\end{proof}

Here is a proposition which says that in a strongly chain recurrent dynamics the only Lipschitz continuous Lyapunov functions are first integrals.

\begin{prop}\label{SCR} 
The neutral set of a Lipschitz continuous Lyapunov  function for the flow $\psi$ contains the strongly chain recurrent set of $\psi$. In particular, if $\psi$ is strongly chain recurrent then every Lipschitz continuous Lyapunov function is a first integral.
\end{prop}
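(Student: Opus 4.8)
The plan is to prove the sharper pointwise statement, from which both assertions follow: if $h$ is a Lipschitz continuous Lyapunov function with Lipschitz constant $L$ and $x\in\mathcal{SCR}(\psi)$, then $t\mapsto h(\psi_t(x))$ is constant, i.e.\ $x\in\mathcal{N}(h)$. The ``in particular'' then comes for free, since strong chain recurrence means $\mathcal{SCR}(\psi)=X$, so $\mathcal{N}(h)=X$ and $h$ is a first integral.

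First I would fix $\epsilon>0$ and $T\geq 0$ and pick a strong $(\epsilon,T)$-chain $(x_i,t_i)_{i=1,\dots,n}$ from $x$ to $x$, so that $x_1=x_{n+1}=x$, each $t_i\geq T$, and $\sum_{i=1}^n d(\psi_{t_i}(x_i),x_{i+1})<\epsilon$. The heart of the argument is a telescoping identity played against the two defining properties of $h$. Since $x_1=x_{n+1}$, the sum $\sum_{i=1}^n\bigl(h(x_i)-h(x_{i+1})\bigr)$ vanishes. Splitting each summand as
\[
h(x_i)-h(x_{i+1}) = \bigl(h(x_i)-h(\psi_{t_i}(x_i))\bigr) + \bigl(h(\psi_{t_i}(x_i))-h(x_{i+1})\bigr),
\]
the first parenthesis is non-negative because $h$ is a Lyapunov function and $t_i\geq 0$, while the second is bounded below by $-L\,d(\psi_{t_i}(x_i),x_{i+1})$ by the Lipschitz estimate. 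Rearranging the vanishing telescoping sum therefore gives
\[
\sum_{i=1}^n\bigl(h(x_i)-h(\psi_{t_i}(x_i))\bigr) \leq L\sum_{i=1}^n d(\psi_{t_i}(x_i),x_{i+1}) < L\epsilon.
\]

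Since every summand on the left is non-negative, each of them is in particular smaller than $L\epsilon$; examining the index $i=1$, where $x_1=x$, I obtain $0\leq h(x)-h(\psi_{t_1}(x))<L\epsilon$. As $t_1\geq T\geq 0$ and $h$ is non-increasing along orbits (the equivalent monotonicity formulation of the Lyapunov condition recalled above), one has $h(\psi_{t_1}(x))\leq h(\psi_T(x))\leq h(x)$, whence $0\leq h(x)-h(\psi_T(x))<L\epsilon$. Letting $\epsilon\to 0$ yields $h(\psi_T(x))=h(x)$ for every $T\geq 0$.

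Finally I would upgrade this forward-time equality to genuine constancy using the invariance of $\mathcal{SCR}(\psi)$: each point $\psi_s(x)$ again lies in $\mathcal{SCR}(\psi)$, so the previous step applied at $\psi_s(x)$ gives $h(\psi_{s+\tau}(x))=h(\psi_s(x))$ for all $s\in\R$ and all $\tau\geq 0$; hence $t\mapsto h(\psi_t(x))$ takes the same value at any two times and is constant, so $x\in\mathcal{N}(h)$. The only delicate point is the sign bookkeeping in the telescoping estimate, namely the way the non-negativity furnished by the Lyapunov property is pitted against the \emph{total} error $<\epsilon$ of the chain, amplified only by the single constant $L$. This is exactly the Fathi--Pageault device, and it is also the reason why ordinary $(\epsilon,T)$-chains, where one controls each error individually but not their sum, would not be enough to run the argument.
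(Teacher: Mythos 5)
Your proof is correct and is essentially the paper's argument: the same device of playing the Lyapunov monotonicity of $h$ along each link of the chain against the Lipschitz constant times the \emph{total} jump error $<\epsilon$. You organize it as a direct telescoping estimate (obtaining the slightly cleaner bound $L\epsilon$, where the paper argues by contradiction, splits off the first link, and lands on $2c\epsilon$), and your explicit appeal to the invariance of $\mathcal{SCR}(\psi)$ to cover negative times is the same step the paper makes implicitly when it passes from ``an orbit not contained in the neutral set'' to a point $x\in\mathcal{SCR}(\psi)$ and a time $T\geq 0$ with $h(\psi_T(x))<h(x)$.
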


\begin{proof}
Let $h$ be a Lipschitz continuous Lyapunov function for $\psi$. Assume by contradiction that an orbit in the strongly chain recurrent set of $\psi$ is not contained in the neutral set of $h$. Therefore, there exist $x\in \mathcal{SCR}(\psi)$ and $T\geq 0$ such that
\[
h\bigl( \psi_{T}(x) \bigr) < h(x).
\]
We choose a positive number $\epsilon$ such that
\[
\epsilon < \frac{h(x)- h\bigl( \psi_{T}(x) \bigr)}{2c},
\]
where $c$ is a Lipschitz constant for $h$.  Let $(x_i,t_i)_{i=1,\dots,n}$ be a strong $(\epsilon,T)$-chain from $x$ to $x$. Since $t_1\geq T$ we have
\[
h\bigl( \psi_{t_1}(x_1) \bigr) = h\bigl( \psi_{t_1}(x) \bigr) \leq h \bigl( \psi_T(x) \bigr).
\]
Therefore, using the fact that $h$ is $c$-Lipschitz and the property of $(\epsilon,T)$-chain, we find
\begin{equation}
\label{uno}
h(x_2) - h \bigl( \psi_T(x) \bigr) \leq h(x_2) - h \bigl( \psi_{t_1}(x_1) \bigr) \leq c \, d\bigl(x_2,\psi_{t_1}(x_1) \bigr) < c \epsilon.
\end{equation}
Moreover, we have for every $i=1,\dots,n$, 
\[
h(x_{i+1}) - h(x_i) \leq h(x_{i+1}) - h \bigl( \psi_{t_i}(x_i) \bigr) \leq c \, d\bigl(x_{i+1},\psi_{t_i}(x_i) \bigr),
\]
where we have set $x_{n+1}:= x$. By adding these inequalities for $i=2,\dots,n$ and using the property of strong $(\epsilon,T)$-chain, we obtain
\begin{equation}
\label{due}
h(x) - h(x_2) \leq c \sum_{i=2}^n d \bigl( x_{i+1}, \psi_{t_i}(x_i) \bigr) < c \epsilon.
\end{equation}
From (\ref{uno}) and (\ref{due}) we deduce that
\[
h(x) - h \bigl( \psi_T(x) \bigr) < 2 c \epsilon,
\]
which contradicts the choice of $\epsilon$.
\end{proof}

The next two examples show that Proposition \ref{SCR} is somewhat optimal: one cannot replace the strongly chain recurrent set by the chain recurrent set,  and one cannot ask the Lyapunov function to be only continuous. Indeed, we shall construct two smooth flows on the one-torus $\T:= \R/\Z$ having the following properties:
\begin{enumerate}[(i)]
\item The first flow is chain recurrent (but not strongly) and admits a Lipschitz continuous Lyapunov function which is not a first integral.
\item The second  flow is strongly chain recurrent and admits a continuous (but not Lipschitz continuous) Lyapunov function which is not a first integral.
\end{enumerate}
These examples are continuous-time versions of the discrete-time examples appearing in \cite{fp15}[Example 2.2].

\begin{ex} \label{cantor} \textnormal{Endow $\T=\R/\Z$ with the standard quotient metric, and consider a Cantor set $K \subset \T$ of positive Lebesgue measure $\mu(K) = \delta > 0$. Let $\varphi: \T \to [0,+\infty)$ be a non negative smooth function whose set of zeroes is $K$. Let $\psi:\R \times \T \rightarrow \T$ be the flow of the vector field
\[
V(x) := \varphi(x) \frac{\partial}{\partial x}.
\]
Every point $x \in \T$ is chain recurrent, that is $\mathcal{CR}(\psi) = \T$. Indeed, points in $K$ are chain recurrent, being fixed points. The complement of $K$ consists of open intervals, on which the flow moves points in the same direction. Since $K$ consists of fixed points, a $(\epsilon,T)$-chain is allowed to contain subchains of the form $(x_i,T)_{i=h,\dots,k}$, where $x_i$ are points in $K$ with $d(x_i,x_{i+1})<\epsilon$. Therefore, it is always possible to return to $x$ with a $(\epsilon,T)$-chain, by using the flow $\psi$ and an appropriate sequence of jumps on $K$ (everyone of amplitude smaller than $\epsilon$).
\\
Denoting by $\mathbbm{1}_{K}$ and $\mathbbm{1}_{K^c}$ the characteristic functions of the Cantor set $K$ and its complement $K^c$, respectively, we consider the function
\[
h(x) := \frac{1}{\delta} \int_{0}^x \mathbbm{1}_{K}(t)\, dt - \frac{1}{1 - \delta} \int^{x}_0 \mathbbm{1}_{K^c}(t) \, dt \qquad  \forall x\in [0,1].
\]
Since $h(1)=0=h(0)$, $h$ can be seen as a function on $\T$. We easily check that $h$ is a Lipschitz continuous Lyapunov function for $\psi$, which is not a first integral. Indeed,
\[
|h(x)-h(y)| \le 
 \left( \frac{1}{\delta}+\frac{1}{1-\delta} \right)
 |x-y| \qquad \forall x,y \in \T.
\]
Moreover, $h$ is almost everywhere differentiable and 
\[
\begin{split}
dh(x)[V(x)] &= h'(x) \varphi(x) = \left(\frac{1}{\delta} \mathbbm{1}_K(x) - \frac{1}{1-\delta} \mathbbm{1}_{K^c}(x) \right) \varphi(x) \\ &= - \frac{1}{1-\delta} \mathbbm{1}_{K^c}(x) \varphi(x) \leq 0 \qquad \mbox{for a.e. } x\in \T, 
\end{split}
\]
so $h$ is a Lyapunov function by Lemma \ref{lipfirstint}. It is not a first integral, being strictly decreasing along the orbit of each point in the complement of $K$.
}
\end{ex}

\begin{ex} \label{cantor1} 
\textnormal{We define a smooth flow $\psi$ on $\T$ by the same construction of the previous example, but starting from a Cantor set $K$ of zero Lebesgue measure. The fact that $K$ has zero measure now implies that every $x\in K$ is strongly chain recurrent. Indeed, the zero-measure set $K$ can be overstepped by alternating flow lines and jumps whose total amplitude is smaller that $\epsilon$.
\\
In order to construct a continuous Lyapunov function for $\psi$, we see $K$ as the intersection $K=\bigcap_n K_n$, where each $K_n$ is a finite union of closed intervals of total measure $\delta_n$, with $\delta_n\rightarrow 0$. For every $n$, consider the function on $\T$ 
\[
h_n(x) := \frac{1}{\delta_n} \int_{0}^x \mathbbm{1}_{K_n}(t) \, dt - \frac{1}{1 - \delta_n} \int^{x}_0 \mathbbm{1}_{K_n^c}(t)\,  dt,
\]
which is increasing on each open interval that is a connected component of $K_n$ and decreasing on each open interval that is a connected component of $K_n^c$. Moreover, since $(h_n)_{n \in \mathbb{N}}$ is a Cauchy sequence in the uniform norm, it converges uniformly to a continuous function $h$ on $\T$. By the very construction of $\psi$ and $h_n$, the function $h$ is a continuous Lyapunov function which is not a first integral.}
\end{ex}

If $\psi$ is the chain recurrent flow which is described in Example \ref{cantor} or \ref{cantor1}, then every $C^1$ Lyapunov function $h$ for $\psi$ is a first integral. Indeed, in this case the inequality
\[
0 \geq dh(x)[V(x)] = h'(x) \varphi(x) \qquad \forall x\in \T,
\]
implies that $h'(x)\leq 0$, so $h$ is monotone and, being periodic, must be constant. 

In dimension greater than one, it is possible to construct chain recurrent smooth flows admitting a $C^1$ Lyapunov function which is not a first integral. The possibility of having these examples is related to the failure of Sard's theorem for real valued functions which are just $C^{k-1}$ on a $k$-dimensional manifold and was already observed by Hurley in \cite{hur95}. The first complete construction we are aware of is due to Pageault, who in \cite[Theorem 5.3]{pag13} shows that on every connected closed manifold of dimension at least two one can find a smooth flow for which every point is chain recurrent and which admits a $C^1$ Lyapunov function which is not a first integral.

The key step in Pageault's construction is to build a $C^1$ function $h$ whose set of critical points is homeomorphic to an interval. Such an example sharpens the classical example by Whitney, who in \cite{whi35} constructs a $C^1$ function which is not constant on an arc of critical points, but which might have more critical points. Then one considers the flow which is induced by a smooth pseudo-gradient vector field for $h$, i.e. a smooth vector field $V$ whose set of singular points coincides with the critical set of $h$ and such that $dh\circ V<0$ on the complement of this set.

\section{Characterization of strong chain recurrence}
\label{carsec}

Proposition \ref{SCR} says that a point which is not in the neutral set of a Lipschitz continuous Lyapunov function is not strongly chain recurrent. This result  has the following partial converse:

\begin{prop}
\label{pierrecont}
Let $\psi$ be a flow on the metric space $X$. If $x\in X$ is not strongly chain recurrent, then there exists a continuous Lyapunov function $h: X\to \R$ such that $x$ does not belong to the neutral set of $h$. If moreover $\psi_t$ is Lipschitz continuous for every $t\geq 0$, uniformly for $t$ on compact subsets of $[0,+\infty)$, then $h$ can be chosen to be Lipschitz continuous.
\end{prop}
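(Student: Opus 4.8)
The plan is to adapt to flows the Mañé-type potential used by Fathi and Pageault in the discrete setting. Since $x$ is not strongly chain recurrent, I first fix $\epsilon_0>0$ and $T_0>0$ such that there is no strong $(\epsilon_0,T_0)$-chain from $x$ to $x$ (we may take $T_0>0$, since enlarging $T_0$ only shrinks the set of admissible chains). For $a,b\in X$ set
\[
\delta_{T_0}(a,b):=\inf\sum_{i=1}^n d\bigl(\psi_{t_i}(a_i),a_{i+1}\bigr),
\]
the infimum being over all strong chains $(a_i,t_i)_{i=1,\dots,n}$ from $a$ to $b$ with $t_i\ge T_0$ for every $i$, and define the potential $u(y):=\delta_{T_0}(x,y)$. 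Concatenation of chains yields the triangle inequality $\delta_{T_0}(a,c)\le\delta_{T_0}(a,b)+\delta_{T_0}(b,c)$, and the one-step chain $(a,t)$ shows $\delta_{T_0}(a,\psi_t(a))=0$ for every $t\ge T_0$.

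From these two facts I would read off the three properties that drive the proof. First, changing only the endpoint of a near-optimal chain gives $|u(y)-u(y')|\le d(y,y')$, so $u$ is $1$-Lipschitz, hence continuous, with no regularity assumption on $\psi$. Second, combining the triangle inequality with $\delta_{T_0}(y,\psi_t(y))=0$ yields the $T_0$-step monotonicity
\[
u(\psi_t(y))\le u(y)\qquad\text{for all }t\ge T_0,\ y\in X.
\]
Third, the choice of $\epsilon_0$ and $T_0$ gives $u(x)=\delta_{T_0}(x,x)\ge\epsilon_0$, while the trivial chain $(x,T_0)$ gives $u(\psi_{T_0}(x))=0$; thus $u$ genuinely drops along the orbit of $x$.

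The difficulty is that $u$ need not decrease for time steps shorter than $T_0$, so $u$ itself is typically not a Lyapunov function. The key step, and the one I expect to be the main obstacle, is to convert $T_0$-step monotonicity into honest monotonicity without destroying the drop at $x$. This I would achieve by averaging over a window of length exactly $T_0$:
\[
h(y):=\frac{1}{T_0}\int_0^{T_0}u(\psi_s(y))\,ds.
\]
A change of variables gives, for $0\le t\le T_0$,
\[
h(\psi_t(y))-h(y)=\frac{1}{T_0}\int_0^{t}\bigl(u(\psi_{\sigma+T_0}(y))-u(\psi_\sigma(y))\bigr)\,d\sigma\le 0,
\]
where the integrand is nonpositive precisely by the $T_0$-step monotonicity; iterating over subintervals of length $\le T_0$ gives $h\circ\psi_t\le h$ for every $t\ge 0$, so $h$ is a Lyapunov function. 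Taking $t=T_0$ in the same identity and using that $\sigma\mapsto u(\psi_\sigma(x))$ is continuous with $u(x)-u(\psi_{T_0}(x))\ge\epsilon_0>0$, the integrand is positive near $\sigma=0$, whence $h(\psi_{T_0}(x))<h(x)$; therefore $x\notin\mathcal{N}(h)$.

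Finally I would check the regularity of $h$. Continuity of $h$ follows from that of $u$ and of the flow, together with a routine tube-lemma argument controlling $\sup_{s\in[0,T_0]}d(\psi_s(y'),\psi_s(y))$ for $y'$ near $y$. Under the additional hypothesis that $\psi_s$ is Lipschitz uniformly for $s$ in compact sets, set $L:=\sup_{s\in[0,T_0]}\lip(\psi_s)<\infty$; then the $1$-Lipschitz bound for $u$ gives
\[
|h(y)-h(y')|\le\frac{1}{T_0}\int_0^{T_0}d\bigl(\psi_s(y),\psi_s(y')\bigr)\,ds\le L\,d(y,y'),
\]
so $h$ is Lipschitz continuous, which establishes the second assertion.
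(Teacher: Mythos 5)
Your proof is correct and follows essentially the same route as the paper: you construct the same Ma\~n\'e-type potential $L_{T_0}(x,\cdot)$, establish the same three key properties (the $1$-Lipschitz bound, the $T_0$-step monotonicity, and the strict drop along the orbit of $x$), and then regularize over the window $[0,T_0]$, the only difference being that you take the time average over the window where the paper takes the maximum $h(y)=\max_{s\in[0,T_0]}\tilde h(\psi_s(y))$ --- both work equally well and yield the same continuity and Lipschitz estimates. (One cosmetic slip: the integrand $u(\psi_{\sigma+T_0}(x))-u(\psi_\sigma(x))$ is \emph{negative}, not positive, near $\sigma=0$; the conclusion $h(\psi_{T_0}(x))<h(x)$ is unaffected.)
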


This proposition, together with Proposition \ref{SCR}, immediately implies that the strongly chain recurrent set of a flow has the following characterization in terms of Lipschitz continuous Lyapunov functions.

\begin{thm}
\label{pierrethm}
Let  $\psi$ be a flow on the metric space $X$ such that $\psi_t$ is Lipschitz continuous for every $t\geq 0$, uniformly for $t$ on compact subsets of $[0,+\infty)$. Then
\[
\mathcal{SCR}(\psi) = \bigcap_{h} \mathcal{N}(h),
\]
where the intersection is taken over all Lipschitz continuous Lyapunov functions for $\psi$. In particular, $\psi$ is strongly chain recurrent if and only if  every Lipschitz continuous Lyapunov function is a first integral.
\end{thm}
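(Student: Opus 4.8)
The plan is to derive the equality directly from the two preceding propositions, treating the two inclusions separately. For the inclusion $\mathcal{SCR}(\psi) \subseteq \bigcap_h \mathcal{N}(h)$, I would invoke Proposition \ref{SCR}, which asserts $\mathcal{SCR}(\psi) \subseteq \mathcal{N}(h)$ for every single Lipschitz continuous Lyapunov function $h$. Since this holds for each such $h$, intersecting over the whole family preserves the inclusion and gives $\mathcal{SCR}(\psi) \subseteq \bigcap_h \mathcal{N}(h)$. Note that this direction does not even use the Lipschitz regularity of the flow.

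For the reverse inclusion $\bigcap_h \mathcal{N}(h) \subseteq \mathcal{SCR}(\psi)$, I would argue by contraposition. Suppose $x \notin \mathcal{SCR}(\psi)$. Here the uniform-Lipschitz hypothesis on $\psi_t$ enters: the second assertion of Proposition \ref{pierrecont} produces a Lipschitz continuous Lyapunov function $h$ with $x \notin \mathcal{N}(h)$. This single function already witnesses $x \notin \bigcap_h \mathcal{N}(h)$, so the complement of $\mathcal{SCR}(\psi)$ is contained in the complement of the intersection, which is exactly the claimed inclusion. Combining the two inclusions yields the stated equality.

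For the concluding ``in particular'' clause, I would recall that a Lyapunov function $h$ is a first integral precisely when $\mathcal{N}(h) = X$; moreover the family of Lipschitz Lyapunov functions is nonempty (it contains the constants), so the intersection is unambiguous. If $\psi$ is strongly chain recurrent, i.e.\ $\mathcal{SCR}(\psi) = X$, then the equality forces $\bigcap_h \mathcal{N}(h) = X$, hence $\mathcal{N}(h) = X$ for every Lipschitz Lyapunov function $h$, so each such $h$ is a first integral---this is already the content of the second sentence of Proposition \ref{SCR}. Conversely, if every Lipschitz continuous Lyapunov function is a first integral, then every $\mathcal{N}(h)$ equals $X$, so their intersection is $X$, and the equality gives $\mathcal{SCR}(\psi) = X$.

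I do not anticipate any genuine obstacle in this argument. All of the analytic substance---in particular the explicit construction of a nonconstant Lipschitz Lyapunov function separating a point that fails to be strongly chain recurrent, which is the real heart of the matter---has already been carried out in Proposition \ref{pierrecont}. The theorem is thus essentially a bookkeeping statement that packages Propositions \ref{SCR} and \ref{pierrecont} into a single clean characterization of $\mathcal{SCR}(\psi)$.
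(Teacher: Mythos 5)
Your proposal is correct and matches the paper exactly: the paper derives Theorem \ref{pierrethm} as an immediate consequence of Proposition \ref{SCR} (for the inclusion $\mathcal{SCR}(\psi) \subseteq \bigcap_h \mathcal{N}(h)$) and Proposition \ref{pierrecont} (for the reverse inclusion, by contraposition), just as you do. No gaps.
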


This proves statement (i) in Theorem \ref{main1} from the Introduction.

\begin{rem}
\label{remconley}
{\em The above result should be compared to a theorem of Conley's, which states that a continuous flow on a compact metric space admits a continuous Lyapunov function whose neutral set coincides with the chain recurrent set, see \cite[Section II.6.4]{con78}. On the one hand the above theorem is stronger since it produces Lipschitz continuous Lyapunov functions whose neutral set does not contain a given orbit in the complement of  strongly chain recurrent set (which is in general strictly larger than the complement of the chain recurrent set). On the other hand, we do not obtain a single Lyapunov function whose neutral set coincides with the strongly chain recurrent set. This stronger version of Conley's theorem has been proved by Fathi and Pageault in the framework of discrete-time dynamical systems, see \cite{fp15} and \cite{pag11}[Section 2.4]. Our proof uses their techniques.} 
\end{rem}

In order to prove Proposition \ref{pierrecont} we need to introduce some objects. Let $x\in X$. For any $T\geq 0$ we indicate by $\mathcal{C}_T(x)$ the set of chains $(x_i,t_i)_{i=1,\dots,n}$, $n\in \N$, such that $x_1=x$ and $t_i\geq T$ for every $i=1,\dots,n$.  If $y\in X$, the cost of going from $x$ to $y$ through the chain $u=(x_i,t_i)_{i=1,\dots,n} \in \mathcal{C}_T(x)$ is the non-negative quantity
\[
\ell(u,y) := \sum_{i=1}^n d(\psi_{t_i}(x_i),x_{i+1}),
\]
where we set $x_{n+1}:= y$ (see Definition \ref{scrdef}). Then we define a function 
\[
L_T: X \times X \rightarrow [0,+\infty) 
\]
as
\[
L_T(x,y) := \inf_{u \in \mathcal{C}_T(x)} \ell(u,y).
\]
Since the set $\mathcal{C}_T(x)$ is contained in $\mathcal{C}_{T'}(x)$ when $T\geq T'$, the function $T \mapsto L_T(x,y)$ is monotonically increasing for any pair $(x,y)\in X\times X$ and we set
\[
L_{\infty}(x,y) := \lim_{T\rightarrow +\infty} L_T(x,y) = \sup_{T\geq 0} L_T(x,y).
\]
We notice that 
\[
\mathcal{SCR}(\psi) = \{ x\in X \mid L_{\infty}(x,x)=0 \}.
\]
The following lemma summarizes the main properties of the function $L_T$. 

\begin{lem}
\label{propL_T}
For every $T\geq 0$ and every $x,y,z$ in $X$ there holds:
\begin{enumerate}[(i)]
\item $L_T(x,y) \leq L_T(x,z) + L_T(z,y)$;
\item $L_T(x,\psi_t(x))=0$ for every $t\geq T$;
\item $|L_T(x,y) - L_T(x,z)| \leq d(y,z)$.
\end{enumerate}
\end{lem}

\begin{proof} $(i)$ A chain in $\mathcal{C}_T(x)$ going from $x$ to $z$ and a chain in $\mathcal{C}_T(z)$ going from $z$ to $y$ can be concatenated to obtain a chain in  $\mathcal{C}_T(x)$ connecting $x$ to $y$. The triangular inequality is a straightforward consequence of this fact. \\
$(ii)$ For $t \ge T$ it is sufficient to consider the chain $\{ (x,t) \}$ from $x$ to $\psi_t(x)$.  \\
$(iii)$ For a fixed chain $u=(x_i,t_i)_{i=1,\dots,n} \in \mathcal{C}_T(x)$, we have 
\[
\ell(u,y) \le \ell(u,z) + d(y,z).
\]
By taking the infimum over all chains in $\mathcal{C}_T(x)$, we obtain
\[
L_T(x,y) \le L_T(x,z) + d(y,z).
\]
Exchanging the role of $y$ and $z$, the desired inequality immediately follows.
\end{proof}

\begin{proof}[Proof of Proposition \ref{pierrecont}]
Let $x\in X$ be a point which is not strongly chain recurrent for $\psi$, that is, $L_{\infty}(x,x)>0$. Choose $T\geq 0$ large enough so that $L_T(x,x)>0$ and define
\[
\tilde{h}(y):= L_T(x,y) \qquad \forall y\in X.
\]
The function $\tilde{h}$ is 1-Lipschitz by statement (iii) in Lemma \ref{propL_T}. Moreover, statements (i) and (ii) in the same lemma imply that for every $y\in X$ we have
\begin{equation}
\label{pierre1}
\tilde{h}(\psi_t(y)) = L_T(x,\psi_t(y)) \leq L_T(x,y) + L_T(y,\psi_t(y)) =   L_T(x,y) = \tilde{h}(y) \qquad \forall t\geq T.
\end{equation}
Furthermore,
\begin{equation}
\label{pierre2}
\tilde{h}(\psi_t(x)) = L_T(x,\psi_t(x)) = 0 < L_T(x,x) = \tilde{h}(x) \qquad \forall t\geq T.
\end{equation}
In general, the function $\tilde{h}$ is not a Lyapunov function for $\psi$ because (\ref{pierre1}) holds only for $t\geq T$. We can obtain a Lyapunov function from $\tilde{h}$ by setting
\[
h(y):= \max_{s\in [0,T]} \tilde{h}(\psi_s(y)) \qquad \forall y\in X.
\]
Indeed, given $y\in X$, (\ref{pierre1}) implies 
\[
\tilde{h}(\psi_r(y)) \leq \max_{s\in [0,T]} \tilde{h}(\psi_s(y)) = h(y) \qquad \forall r\geq 0,
\]
and, by taking the maximum for $r\in [t,t+T]$, we obtain
\[
h(\psi_t(y))  \leq h(y) \qquad \forall t\geq 0.
\]
Hence $h$ is a Lyapunov function for $\psi$. Moreover, (\ref{pierre2}) implies
\[
\tilde{h}(\psi_{T+s}(x)) < \tilde{h}(x) \qquad \forall s\geq 0,
\]
and by taking the maximum for $s\in [0,T]$ we find
\[
h(\psi_T(x)) < \tilde{h}(x) \leq h(x).
\]
This shows that $x$ is not in the neutral set of the Lyapunov function $h$. 

There remains to show that $h$ is continuous, and that it is Lipschitz continuous if $\psi_t$ is Lipschitz continuous for every $t\geq 0$, uniformly for $t$ on compact subsets of $[0,+\infty)$. Let $y,z\in X$ and let $s^*\in [0,T]$ be such that
\[
h(y) = \tilde{h}(\psi_{s^*}(y)).
\]
From
\[
h(z) \geq \tilde{h}(\psi_{s^*}(z))
\]
we obtain, using the fact that $\tilde{h}$ is 1-Lipschitz
\[
h(y) - h(z) \leq \tilde{h}(\psi_{s^*}(y)) - \tilde{h}(\psi_{s^*}(z)) \leq d(\psi_{s^*}(y),\psi_{s^*}(z)).
\]
By exchanging the role of $y$ and $z$ we find
\[
|h(y) - h(z)| \leq \max_{s\in [0,T]} d(\psi_{s}(y),\psi_{s}(z))
\]
This inequality shows that $h$ is in general continuous, and Lipschitz continuous whenever $\psi_t$ is Lipschitz continuous uniformly for $t\in [0,T]$.
\end{proof}

\section{Chain transitivity for flows in metric spaces}

We now recall the definition of chain transitive and strongly chain transitive flow on a metric space. 

\begin{defn}
The flow $\psi$ on the metric space $(X,d)$ is said to be (strongly) chain transitive if for every $x,y\in X$, every $\epsilon>0$ and every $T\geq 0$ there is a (strong) $(\epsilon,T)$-chain from $x$ to $y$.
\end{defn}

(Strongly) chain transitive flows are a fortiori (strongly) chain recurrent. The flows of Examples \ref{cantor} and \ref{cantor1} are chain transitive, but only that of Example \ref{cantor1} is strongly chain transitive. 

We have seen in the Proposition \ref{SCR} that strong chain recurrence forces all Lipschitz continuous Lyapunov function to be first integral. Similarly, strong chain transitivity implies all Lipschitz continuous Lyapunov function to be constant.

\begin{prop}
\label{ppp1}
If $\psi$ is strongly chain transitive then every Lipschitz continuous Lyapunov function is constant.
\end{prop}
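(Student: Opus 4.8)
The plan is to adapt the telescoping estimate used in the proof of Proposition \ref{SCR}, but to exploit the full force of chain transitivity by connecting two \emph{arbitrary} points rather than closing up a loop. Let $h$ be a Lipschitz continuous Lyapunov function with Lipschitz constant $c$, and fix $x,y\in X$. The goal is to show $h(x)=h(y)$; since $x,y$ are arbitrary, this forces $h$ to be constant.

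First I would fix $\epsilon>0$ and $T\geq 0$ and, invoking strong chain transitivity, pick a strong $(\epsilon,T)$-chain $(x_i,t_i)_{i=1,\dots,n}$ from $x$ to $y$, so that $x_1=x$, $x_{n+1}:=y$, each $t_i\geq T\geq 0$, and $\sum_{i=1}^n d(\psi_{t_i}(x_i),x_{i+1})<\epsilon$. The two ingredients are then the Lyapunov inequality $h(\psi_{t_i}(x_i))\leq h(x_i)$ (valid because $t_i\geq 0$) and the $c$-Lipschitz bound, which combine to give, for each $i$,
\[
h(x_{i+1}) - h(x_i) \leq h(x_{i+1}) - h(\psi_{t_i}(x_i)) \leq c\, d\bigl(x_{i+1},\psi_{t_i}(x_i)\bigr).
\]
Summing over $i=1,\dots,n$, the left-hand side telescopes to $h(y)-h(x)$, while the right-hand side is bounded by $c\epsilon$ thanks to the strong-chain property. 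Hence $h(y)-h(x)<c\epsilon$.

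Since $\epsilon>0$ is arbitrary, letting $\epsilon\to 0$ yields $h(y)\leq h(x)$. Exchanging the roles of $x$ and $y$ and repeating the argument with a strong $(\epsilon,T)$-chain from $y$ to $x$ gives the reverse inequality $h(x)\leq h(y)$, so $h(x)=h(y)$. As $x$ and $y$ were arbitrary, $h$ is constant.

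I expect no serious obstacle here: the computation is essentially the one already carried out for Proposition \ref{SCR}, and in fact it is cleaner, because for an open chain from $x$ to $y$ the telescoping sum collapses directly to the endpoint difference $h(y)-h(x)$ without the special treatment of the first segment that a closed loop required. The only point deserving care is confirming that the single-sided Lyapunov hypothesis suffices in both directions — which it does, since the symmetry is supplied not by $h$ but by the symmetric availability of chains guaranteed by transitivity.
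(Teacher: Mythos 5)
Your proof is correct and follows essentially the same argument as the paper's: a telescoping sum over a strong $(\epsilon,0)$-chain from $x$ to $y$, combined with the Lyapunov inequality and the Lipschitz bound, followed by letting $\epsilon\to 0$ and exchanging the roles of $x$ and $y$. The only cosmetic difference is that the paper takes $T=0$ directly, whereas you allow an arbitrary $T\geq 0$, which changes nothing.
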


\begin{proof}
Assume that $h: X \rightarrow \R$ is a $c$-Lipschitz continuous Lyapunov function for $\psi$ and fix two points $x,y\in X$. For every $\epsilon>0$ we can find a strong $(\epsilon,0)$-chain, that is a finite sequence $(x_i,t_i)_{i=1,\dots,n}$ such that $x_1=x$, $t_i\geq 0$ and, setting $x_{n+1}:= y$,
\[
\sum_{i=1}^n d(\psi_{t_i}(x_i),x_{i+1}) < \epsilon.
\]
From the fact that $h$ is a Lyapunov function and is $c$-Lipschitz continuous, we find for every $i=1,\dots n$
\[
h(x_{i+1}) - h(x_i) \leq h(x_{i+1}) - h(\psi_{t_i}(x_i)) \leq c \, d\bigl( \psi_{t_i}(x_i), x_{i+1} \bigr),
\]
and summing over all such indexes $i$ we get
\[
h(y) - h(x) \leq c \sum_{i=1}^n d(\psi_{t_i}(x_i),x_{i+1}) < c \epsilon.
\]
Therefore, $h(y)\leq h(x)$. By exchanging the role of $x$ and $y$ we conclude that $h(y)=h(x)$, and hence $h$ is constant.
\end{proof}

Here is a partial converse of the above proposition.

\begin{prop}
\label{ppp2}
Let $\psi$ be a flow on the metric space $(X,d)$. Assume that every continuous Lyapunov function for $\psi$ is constant. Then $\psi$ is strongly chain recurrent. If moreover $\psi_t$ is Lipschitz continuous  for every $t\geq 0$, uniformly for $t$ on compact subsets of $[0,+\infty)$, then the above conclusion holds also assuming only that every Lipschitz continuous Lyapunov function for $\psi$ is constant.
\end{prop}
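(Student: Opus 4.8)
The plan is to argue by contraposition and to reuse, essentially verbatim, the Lyapunov function built in the proof of Proposition~\ref{pierrecont}. Since strong chain transitivity is a stronger condition than strong chain recurrence, it suffices (in fact it over-suffices) to prove the sharper statement that under the hypothesis $\psi$ is strongly chain \emph{transitive}; this is also the natural partial converse of Proposition~\ref{ppp1}. The contrapositive to establish is therefore: whenever $\psi$ fails to be strongly chain transitive, there exists a non-constant continuous Lyapunov function, which can moreover be taken Lipschitz continuous under the uniform Lipschitz assumption on the maps $\psi_t$. Both regularity regimes will be handled simultaneously, because the only place where the hypothesis on $\psi_t$ enters is the final continuity/Lipschitz estimate, exactly as in Proposition~\ref{pierrecont}.

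Suppose then that $\psi$ is not strongly chain transitive. By definition there exist points $x,y\in X$, a radius $\epsilon>0$ and a time $T\geq 0$ admitting no strong $(\epsilon,T)$-chain from $x$ to $y$; in the notation preceding Lemma~\ref{propL_T} this means precisely that $L_T(x,y)\geq \epsilon>0$. I would then set, exactly as in the proof of Proposition~\ref{pierrecont},
\[
\tilde h(z):=L_T(x,z), \qquad h(z):=\max_{s\in[0,T]}\tilde h(\psi_s(z)) \qquad \forall z\in X.
\]
The function $\tilde h$ is $1$-Lipschitz by Lemma~\ref{propL_T}(iii), and the computation in Proposition~\ref{pierrecont}, based on Lemma~\ref{propL_T}(i)--(ii), shows that $h$ is a Lyapunov function for $\psi$ satisfying $|h(z)-h(w)|\leq \max_{s\in[0,T]} d(\psi_s(z),\psi_s(w))$. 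Hence $h$ is continuous in general, and Lipschitz continuous as soon as the maps $\psi_s$, $s\in[0,T]$, are uniformly Lipschitz.

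It remains to read off the non-constancy of $h$ from the failure of transitivity, and this is the only genuinely new ingredient. On the one hand,
\[
h(y)\geq \tilde h(y)=L_T(x,y)\geq \epsilon>0.
\]
On the other hand, Lemma~\ref{propL_T}(ii) gives $\tilde h(\psi_t(x))=L_T(x,\psi_t(x))=0$ for every $t\geq T$; since $s+T\geq T$ for all $s\in[0,T]$, this yields
\[
h(\psi_T(x))=\max_{s\in[0,T]}\tilde h(\psi_{s+T}(x))=0.
\]
Thus $h(y)>0=h(\psi_T(x))$, so $h$ is a non-constant continuous Lyapunov function, Lipschitz continuous under the additional assumption, which completes the contrapositive.

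Since the analytic heart of the argument --- that $h$ is a well-defined Lyapunov function with the claimed regularity --- has already been carried out in Proposition~\ref{pierrecont}, I do not expect a serious obstacle here. The whole content is the decision to start from a non-transitive pair $(x,y)$ rather than from a non-recurrent point, together with the elementary comparison of the values of $h$ at $y$ and along the forward orbit of $x$. I note in passing that the weaker literal conclusion, strong chain recurrence, is in fact immediate from Proposition~\ref{pierrecont} alone: a point that is not strongly chain recurrent lies outside the neutral set of the Lyapunov function produced there, which is therefore non-constant.
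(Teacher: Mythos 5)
Your proof is correct and essentially identical to the paper's: both start from a pair $(x,y)$ with $L_T(x,y)\geq\epsilon$, set $\tilde h(z)=L_T(x,z)$, regularize to $h(z)=\max_{s\in[0,T]}\tilde h(\psi_s(z))$ exactly as in Proposition~\ref{pierrecont}, and read off non-constancy from $h(y)\geq\epsilon>0=h(\psi_T(x))$. The paper phrases the contradiction hypothesis as failure of strong chain recurrence, but its argument in fact only uses the existence of such a pair $(x,y)$, which is precisely your (stronger, and correctly proved) contrapositive of strong chain transitivity.
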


\begin{proof}
Assume by contradiction that $\psi$ is not strongly chain recurrent. Our aim is to construct a non-constant continuous Lyapunov function for $\psi$, which is Lipschitz continuous under the further assumption that $\psi_t$ is Lipschitz continuous  for every $t\geq 0$, uniformly for $t$ on compact subsets of $[0,+\infty)$.

Since $\psi$ is not strongly chain recurrent,
there exist $x,y\in X$, $\epsilon>0$ and $T\geq 0$ such that for every $(x_i,t_i)_{i=1,\dots,n} \subset X \times [T,+\infty)$ with $x_1=x$ there holds
\[
\sum_{i=1}^n d(\psi_{t_i}(x_i),x_{i+1}) \geq \epsilon,
\]
where we have set $x_{n+1}:= y$.  By using the notation introduced in Section \ref{carsec}, this means that
\[
L_T(x,y) \geq \epsilon.
\]
The function
\[
\tilde{h}: X \rightarrow \R, \qquad \tilde{h}(z) := L_T(x,z)
\]
is 1-Lipschitz by statement (iii) in Lemma \ref{propL_T}. Moreover,
\begin{equation}
\label{jj1}
\tilde{h}(y) = L_T(x,y) \geq \epsilon,
\end{equation}
while
\begin{equation}
\label{jj2}
\tilde{h}(\psi_t(x)) = L_T(x,\psi_t(x)) = 0 \qquad \forall t\geq T,
\end{equation}
by statement (ii) in Lemma \ref{propL_T}. By statements (i) and (ii) in Lemma \ref{propL_T} we also have for every $z\in X$ and $t\geq T$
\[
\tilde{h}(\psi_t(z)) = L_T(x,\psi_t(z)) \leq L_T(x,z) + L_T(z,\psi_t(z)) = L_T(x,z) = \tilde{h}(z).
\]
Arguing as in the proof of  Proposition \ref{pierrecont}, we see that
\[
h(z) := \max_{s\in [0,T]} \tilde{h}(\psi_s(z))
\]
is a continuous Lyapunov function for $\psi$, and it is Lipschitz continuous when  $\psi_t$ is Lipschitz continuos uniformly for $t\in [0,T]$. There remains to check that $h$ is not constant. By (\ref{jj1}) we have
\[
h(y) \geq \tilde{h}(y) \geq \epsilon >0,
\]
while by (\ref{jj2}) we get
\[
h(\psi_T(x)) = \max_{t\in [T,2T]} \tilde{h}(\psi_t(x)) = 0,
\]
concluding the proof.
\end{proof}

Propositions \ref{ppp1} and \ref{ppp2} have the following immediate consequence, which is statement (ii) in Theorem \ref{main1} from the introduction.

\begin{cor}
\label{coro}
Let $\psi$ be a flow on the metric space $(X,d)$ such that $\psi_t$ is Lipschitz continuous  for every $t\geq 0$, uniformly for $t$ on compact subsets of $[0,+\infty)$. Then $\psi$ is strongly chain transitive if and only if every Lipschitz continuous Lyapunov function is constant. 
\end{cor}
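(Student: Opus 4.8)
The plan is to read the two implications off Propositions \ref{ppp1} and \ref{ppp2}, the only genuine point being that the construction behind Proposition \ref{ppp2} delivers transitivity, and not merely recurrence, as soon as one feeds it a pair of \emph{distinct} points. The implication ``$\psi$ strongly chain transitive $\Rightarrow$ every Lipschitz continuous Lyapunov function is constant'' is exactly Proposition \ref{ppp1}, which does not even use the regularity hypothesis. So I would only need to establish the converse: assuming that every Lipschitz continuous Lyapunov function is constant, show that $\psi$ is strongly chain transitive.

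For the converse I would argue by contraposition, imitating the proof of Proposition \ref{ppp2}. Recalling the functions $L_T$ from Section \ref{carsec}, strong chain transitivity of $\psi$ is equivalent to $L_T(x,y)=0$ for all $x,y\in X$ and all $T\geq 0$. Hence, if $\psi$ fails to be strongly chain transitive, I can find points $x,y\in X$, a time $T\geq 0$ and a number $\epsilon>0$ with $L_T(x,y)\geq \epsilon$. Here $x$ and $y$ need not coincide, which is precisely the difference from the recurrence setting of Proposition \ref{ppp2}, where one is handed a single point $x$ with $L_T(x,x)\geq\epsilon$.

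From such a pair I would run the construction of Proposition \ref{ppp2} verbatim: set $\tilde{h}(z):=L_T(x,z)$, which is $1$-Lipschitz by Lemma \ref{propL_T}(iii) and satisfies $\tilde{h}(\psi_t(z))\leq \tilde{h}(z)$ for $t\geq T$ by parts (i) and (ii) of the same lemma, and then put
\[
h(z):=\max_{s\in[0,T]}\tilde{h}(\psi_s(z)).
\]
This $h$ is a Lyapunov function; it is Lipschitz continuous because $\psi_s$ is Lipschitz uniformly for $s\in[0,T]$; and it is non-constant, since $h(y)\geq \tilde{h}(y)=L_T(x,y)\geq \epsilon>0$ while $h(\psi_T(x))=\max_{t\in[T,2T]}\tilde{h}(\psi_t(x))=0$ by Lemma \ref{propL_T}(ii). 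This contradicts the hypothesis, and the converse follows.

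The main obstacle is conceptual rather than computational: Proposition \ref{ppp2} is phrased as producing strong chain \emph{recurrence}, so one must observe that its proof never uses $x=y$, and that separating a pair of distinct points by a non-constant Lyapunov function is exactly what obstructs transitivity. The only estimate to double-check is the Lipschitz continuity of $h$, which, as in Proposition \ref{pierrecont}, reduces to $|h(y)-h(z)|\leq \max_{s\in[0,T]} d(\psi_s(y),\psi_s(z))$ together with the uniform Lipschitz hypothesis on $\psi$ over $[0,T]$; this is also where the regularity assumption of the statement is genuinely needed, since it is what guarantees the existence of a \emph{Lipschitz} (not merely continuous) separating function.
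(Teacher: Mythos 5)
Your proposal is correct and follows essentially the same route as the paper: the forward implication is Proposition \ref{ppp1}, and the converse is obtained by running the construction in the proof of Proposition \ref{ppp2} on an arbitrary pair $(x,y)$ with $L_T(x,y)\geq\epsilon$, which is exactly how that proof is already written (it never uses $y=x$). Your explicit remark that this is the only point to check, together with the Lipschitz estimate $|h(y)-h(z)|\leq \max_{s\in[0,T]} d(\psi_s(y),\psi_s(z))$, matches the paper's intended "immediate consequence" argument.
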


\begin{rem}
\textnormal{Consider the following weaker version of strong chain transitivity: for every $x,y\in X$ and every $\epsilon>0$ there exists a strong $(\epsilon,0)$-chain from $x$ to $y$. In other words, we do not require the $t_i$'s in the chain $(x_i,t_i)_{i=1,\dots,n}$ to be larger than an arbitrarily given number $T\geq 0$, but just non-negative. As the proof of Proposition \ref{ppp1} shows, this property implies that every Lipschitz continuous Lyapunov function is constant. Conversely, if every Lipschitz continuous Lyapunov function is constant then this property holds: indeed, if there are $x,y\in X$ and $\epsilon>0$ such that there is no strong $(\epsilon,0)$-chain from $x$ to $y$, then $L_0(x,y)\geq \epsilon$, and $h(z):= L_0(x,z)$ is a Lipschitz continuous Lyapunov function which is not constant because
\[
h(x) = L_0(x,x) = 0 < \epsilon \leq L_0(x,y) = h(y).
\]
Therefore, the above property is equivalent to the fact that every Lipschitz continuous Lyapunov function is constant. Under the additional assumption that $\psi_t$ is Lipschitz continuous  for every $t\geq 0$, uniformly for $t$ on compact subsets of $[0,+\infty)$, Corollary \ref{coro} implies that this property is equivalent to strong chain transitivity. For an arbitrary flow we suspect this property to be strictly weaker than strong chain transitivity.}
\end{rem}

It is well known that ergodicity with respect to a measure which is positive on every non-empty open set implies chain transitivity. 
We conclude this section by showing that it actually implies strong chain transitivity. See \cite{zhe98} for related results in the case of homeomorphisms. Here we assume the metric space $(X,d)$ to be compact. Let $\mu$ be a probability measure on the Borel $\sigma$-algebra of $X$. We recall that a continuous flow $\psi$ on $X$ preserving the measure $\mu$ is {\em ergodic} with respect to $\mu$ if and only if for every $f\in C(X)$ and $g\in L^1(\mu)$ there holds
\begin{equation}
\label{ergo}
\lim_{T\rightarrow +\infty} \frac{1}{T} \int_0^T \left( \int_X f(\psi_t(x)) g(x)\, d\mu(x)  \right) \, dt = \int_X f\, d\mu  \int_X g\, d\mu.
\end{equation}
See e.g. \cite[Lemma 6.11]{wal82}.   

\begin{prop} \label{ergodic} Suppose that the flow $\psi$ on the compact metric space $(X,d)$ is ergodic with respect to a Borel measure $\mu$ which is positive on every non-empty open set. Then every continuous Lyapunov function for $\psi$ is  constant. In particular, $\psi$ is strongly chain transitive.
\end{prop}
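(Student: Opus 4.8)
The plan is to prove the substantive claim — that every continuous Lyapunov function $h$ is constant — and then deduce strong chain transitivity as a formal consequence. Since $X$ is compact and $h$ is continuous, $h$ is bounded and hence $\mu$-integrable. The first step exploits the invariance of $\mu$. Because $\psi$ preserves $\mu$, one has $\int_X h\circ\psi_t\,d\mu = \int_X h\,d\mu$ for every $t$, while the Lyapunov property gives $h\circ\psi_t\le h$ pointwise for $t\ge 0$. Thus for each fixed $t\ge 0$ the non-negative function $h - h\circ\psi_t$ has vanishing integral, and therefore $h\circ\psi_t = h$ $\mu$-almost everywhere. The point of this step is that a Lyapunov function is a priori only monotone along orbits; invariance of $\mu$ is exactly what upgrades this monotonicity to an almost-everywhere equality. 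Note that the exceptional null set may depend on $t$, and the next step is designed precisely to avoid needing it to be uniform in $t$.

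The second step feeds this into the ergodicity characterization (\ref{ergo}), taking $f=g=h\in C(X)$. For each fixed $t\ge 0$ the inner integral equals $\int_X h(\psi_t(x))h(x)\,d\mu(x) = \int_X h^2\,d\mu$ by the almost-everywhere identity just established, so the time-average on the left-hand side of (\ref{ergo}) is identically $\int_X h^2\,d\mu$, whereas the right-hand side is $\bigl(\int_X h\,d\mu\bigr)^2$. Hence $\int_X h^2\,d\mu = \bigl(\int_X h\,d\mu\bigr)^2$, that is, the $\mu$-variance of $h$ vanishes, so $h$ equals its mean $\mu$-almost everywhere. The third step upgrades this to everywhere: if $h$ were not identically equal to the constant $c:=\int_X h\,d\mu$, then $\{h\neq c\}$ would be a non-empty open set by continuity of $h$, hence of positive $\mu$-measure since $\mu$ is positive on non-empty open sets, contradicting the previous step. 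Therefore $h\equiv c$, and every continuous Lyapunov function is constant.

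Finally, for the ``in particular'' assertion I invoke the construction in the proof of Proposition \ref{ppp2}. If $\psi$ were not strongly chain transitive, there would exist $x,y\in X$, $\epsilon>0$ and $T\ge 0$ with $L_T(x,y)\ge\epsilon$, and then $h(z):=\max_{s\in[0,T]}L_T(x,\psi_s(z))$ would be a non-constant continuous Lyapunov function (the argument there applies verbatim to any pair $x,y$ with $L_T(x,y)>0$, not only to the recurrent case $y=x$). This contradicts the conclusion just proved, so $\psi$ is strongly chain transitive.

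The only delicate point in the whole argument is the measure-theoretic one in the first two steps: I obtain $h\circ\psi_t=h$ a.e.\ only for each fixed $t$, yet must draw a global conclusion. This is exactly why I route the argument through the bilinear expression (\ref{ergo}) with $f=g=h$ — which evaluates each time $t$ inside the integral before averaging — rather than attempting to assemble a single genuinely flow-invariant function out of the $t$-dependent null sets. Everything else (integrability from compactness, the variance identity, and the passage from ``a.e.\ constant'' to ``everywhere constant'' via full support) is routine.
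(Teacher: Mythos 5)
Your proof is correct, but the route to the a.e.\ constancy of $h$ is genuinely different from the paper's. You first use the invariance of $\mu$ (which is built into the definition of ergodicity adopted in the paper) to upgrade the pointwise inequality $h\circ\psi_t\le h$ to the a.e.\ identity $h\circ\psi_t=h$ for each fixed $t\ge 0$, and then feed $f=g=h$ into (\ref{ergo}) to obtain the vanishing of the variance $\int_X h^2\,d\mu-\bigl(\int_X h\,d\mu\bigr)^2$. The paper instead takes $f=h$ and $g=\mathbbm{1}_{A_\epsilon}$ with $A_\epsilon=\{h<\min h+\epsilon\}$ a non-empty open sublevel set; the positive invariance of $A_\epsilon$ (a consequence of the Lyapunov property alone, with no appeal to measure invariance) sandwiches the ergodic average between $a\,\mu(A_\epsilon)$ and $(a+\epsilon)\mu(A_\epsilon)$ and forces $\int_X h\,d\mu=\min h$, hence $h=\min h$ a.e. So your argument leans on invariance of the measure plus a variance identity, while the paper's leans on the dynamical invariance of sublevel sets; both then pass from ``a.e.\ constant'' to ``everywhere constant'' via continuity and full support in the same way, and both are equally rigorous. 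One small point in your favour: for the ``in particular'' assertion you correctly observe that the statement of Proposition \ref{ppp2} only yields strong chain \emph{recurrence}, and that one must go into its proof --- which constructs the non-constant continuous Lyapunov function $h(z)=\max_{s\in[0,T]}L_T(x,\psi_s(z))$ for an arbitrary pair $x,y$ with $L_T(x,y)\ge\epsilon$ --- to extract strong chain \emph{transitivity} from the constancy of all continuous Lyapunov functions; the paper cites Proposition \ref{ppp2} directly, which is slightly imprecise on exactly this point.
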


\begin{proof}
Let $h$ be a continuous Lyapunov function for $\psi$.
Set $a:=\min h$ and consider for $\epsilon>0$ the non-empty open set 
\[
A_{\epsilon}:= \{x\in X \mid h(x)< a+ \epsilon\}.
\]
Since $A_{\epsilon}$ is positively invariant, we have the bounds
\[
a \, \mu(A_{\epsilon}) \leq \int_X h(\psi_t(x)) \mathbbm{1}_{A_{\epsilon}} (x) \, d\mu(x) \leq (a+\epsilon) \, \mu(A_{\epsilon}),
\]
for every $t\geq 0$. Since $\psi$ is ergodic, by inserting $f=h$ and $g=\mathbbm{1}_{A_{\epsilon}}$ in (\ref{ergo}) we find that the number
\[
\mu(A_{\epsilon}) \int_X h\, d\mu = \lim_{T\rightarrow +\infty} \frac{1}{T} \int_0^T \left( \int_X h(\psi_t(x)) \mathbbm{1}_{A_{\epsilon}} (x) \, d\mu(x)   \right) \, dt
\]
belongs to the interval $[a \mu(A_{\epsilon}),(a+\epsilon) \mu(A_{\epsilon})]$. Since $\mu(A_{\epsilon})>0$, we deduce that
\[
\int_X h\, d\mu \in [a,a+\epsilon]
\]
Since $\epsilon$ is arbitrary, we obtain that
\[ 
\int_X h\, d\mu = a.
\]
Together with the fact that $h\geq a$, this implies that $h=a$ $\mu$-a.e. Since $h$ is a continuous function and $\mu$ is positive on non-empty open sets, we deduce that $h=a$ everywhere. The last assertion follows from Proposition \ref{ppp2}.
\end{proof}

\section{Lagrangian submanifolds of cotangent bundles and their Liouville class}
\label{liouvillesec}

In the remaining part of the paper, we consider the cotangent bundle $T^*M$ of a closed manifold $M$. Points in $T^*M$ are denoted as $(x,y)$, with $x\in M$ and $y\in T_x^*M$. We denote by $\lambda$ the Liouville form of $T^*M$, that is the one-form whose expression in local cotangent coordinates is
\[
\lambda(x^1,\dots,x^n,y_1,\dots,y_n) = \sum_{j=1}^n y_j \, dx^j.
\]
We equip $T^*M$ with the symplectic form $\omega=d\lambda$. 

We denote by $\mathcal{L}(T^*M)$ the set of closed Lagrangian submanifolds $\Lambda$ of $T^*M$ which are Lagrangian isotopic to the zero-section $\mathcal{O}$ of $T^*M$. This means that there is a smooth family of Lagrangian embeddings
\[
\varphi_t : M \rightarrow T^*M, \qquad t\in [0,1],
\]
such that $\Lambda=\varphi_1(M)$ and $\varphi_0$ is the standard embedding onto the zero-section  $\mathcal{O}$. Let $\Lambda$ be in $\mathcal{L}(T^*M)$. The Liouville class of $\Lambda$ is an element of the first De Rham cohomology group $H^1(M,\R)$ and is defined as follows. Choose a smooth family of Lagrangian embeddings $\varphi_t$ as above and denote by $\tilde{\varphi}_1: M \rightarrow \Lambda$ the map which is obtained from $\varphi_1$ by restriction of the codomain. Denote by $\imath_{\Lambda} : \Lambda \hookrightarrow T^*M$ the inclusion. Since $\Lambda$ is Lagrangian, the one-form $\imath_{\Lambda}^* \lambda$ is closed, and hence defines an element $[\imath_{\Lambda}^* \lambda]$ of $H^1(\Lambda,\R)$. The {\em Liouville class} of $\Lambda$ is the cohomology class
\[
\mathrm{Liouville}(\Lambda) := \tilde{\varphi}_1^* [\imath_{\Lambda}^* \lambda] \in H^1(M,\R).
\]
Changing the family $\varphi_t$ produces a new map $\tilde{\varphi}_1$, which is homotopic to the previous one. This shows that the Liouville class of $\Lambda$ does not depend on the choice of the family $\varphi_t$. Lagrangian submanifolds with vanishing Liouville class are called {\em exact}.

Special elements of $\mathcal{L}(T^*M)$ are the images of closed forms: in fact, if $\theta$ is a one-form on $M$, then $\theta(M) \subset T^*M$ is a Lagrangian submanifold if and only of $\theta$ is closed. In the latter case, the isotopy $t\mapsto t\theta(M)$ shows that $\theta(M)$ is an element of $\mathcal{L}(T^*M)$. Here we are seeing a one form on $M$ as a section $\theta: M \rightarrow T^*M$. When $M$ is a torus one often sees a one form as a map $\theta: \T^n \rightarrow (\R^n)^*$ and consequently talks about the {\em graph} of $\theta$ in $T^* \T^n = \T^n \times (\R^n)^*$ rather than its image. The Liouville class of $\theta(M)$ is precisely the cohomology class of $\theta$:
\[
\mathrm{Liouville}(\theta(M)) = [\theta],
\]
for every closed one-form $\theta$.

Let $\theta$ be a one-form on $M$. The fiberwise translation 
\begin{equation}
\label{shift}
\sigma: T^*M \rightarrow T^*M, \qquad (x,y) \mapsto (x,y+\theta(x)),
\end{equation}
is a symplectic diffeomorphism if and only of $\theta$ is closed. In the next lemma we investigate the effect of a symplectic fiberwise translation on the Liouville class of an element of $\mathcal{L}(T^*M)$.

\begin{lem}
\label{Lio-shi}
Let $\Lambda\in \mathcal{L}(T^*M)$, let $\theta$ be a closed one-form on $M$, and let $\sigma$ be the symplectic fiberwise translation which is defined in (\ref{shift}). Then $\sigma(\Lambda)$ belongs to $\mathcal{L}(T^*M)$ and
\[
\mathrm{Liouville}(\sigma(\Lambda)) = \mathrm{Liouville}(\Lambda) + [\theta].
\]
\end{lem}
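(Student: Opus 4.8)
The plan is to prove the two assertions in turn, leaning on the fact (already established in the text) that the Liouville class is independent of the chosen Lagrangian isotopy. This independence lets me record the following convenient reformulation: for any $\Lambda' \in \mathcal{L}(T^*M)$ with a chosen isotopy $\varphi_t'$ from the zero section, one has $\mathrm{Liouville}(\Lambda') = [(\varphi_1')^* \lambda]$, since $\tilde{\varphi}_1'^* \imath_{\Lambda'}^* \lambda = (\varphi_1')^*\lambda$. Thus the whole computation reduces to comparing pulled-back Liouville forms for well-chosen isotopies of $\Lambda$ and $\sigma(\Lambda)$.

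First I would check that $\sigma(\Lambda) \in \mathcal{L}(T^*M)$. Since $\theta$ is closed, $\sigma$ is a symplectomorphism, so $\sigma(\Lambda)$ is a closed Lagrangian submanifold. Given a Lagrangian isotopy $\varphi_t$ with $\varphi_0$ onto $\mathcal{O}$ and $\varphi_1(M)=\Lambda$, the maps $\sigma\circ\varphi_t$ form a Lagrangian isotopy from $\sigma(\mathcal{O})=\theta(M)$ to $\sigma(\Lambda)$. Because $\theta(M)$ is itself isotopic to $\mathcal{O}$ through $s\mapsto (s\theta)(M)$, concatenating the two isotopies (which match, since $\sigma\circ\varphi_0=\eta_1$ has image $\theta(M)$) exhibits $\sigma(\Lambda)$ as Lagrangian isotopic to the zero section. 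The endpoint map of the concatenation is exactly $\sigma\circ\varphi_1$.

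For the cohomological identity I would compute using this concatenated isotopy, so that $\mathrm{Liouville}(\sigma(\Lambda)) = [(\sigma\circ\varphi_1)^*\lambda] = [\varphi_1^*(\sigma^*\lambda)]$. The crucial algebraic step is the identity $\sigma^*\lambda = \lambda + \pi^*\theta$, where $\pi:T^*M\to M$ is the bundle projection; this follows at once from the tautological characterization of $\lambda$ together with $\pi\circ\sigma=\pi$. Substituting yields $\varphi_1^*(\sigma^*\lambda) = \varphi_1^*\lambda + (\pi\circ\varphi_1)^*\theta$. Finally, since $\pi\circ\varphi_0=\mathrm{id}_M$ while $t\mapsto \pi\circ\varphi_t$ is a homotopy, the map $\pi\circ\varphi_1$ is homotopic to $\mathrm{id}_M$, so by homotopy invariance of the induced map on de Rham cohomology $[(\pi\circ\varphi_1)^*\theta]=[\theta]$. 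Passing to classes gives $\mathrm{Liouville}(\sigma(\Lambda)) = \mathrm{Liouville}(\Lambda) + [\theta]$.

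The computation itself is essentially routine, and I expect the genuine content to be concentrated in two places: the identity $\sigma^*\lambda = \lambda + \pi^*\theta$, which is the heart of the matter, and the homotopy argument guaranteeing that the projection of the time-1 embedding acts trivially on $[\theta]$. The main (mild) obstacle will be bookkeeping: ensuring the concatenated isotopy is admissible and that its endpoint is precisely $\sigma\circ\varphi_1$, so that the $\varphi_1^*\lambda$ terms for $\Lambda$ and $\sigma(\Lambda)$ are computed with the same isotopy and cancel, leaving the error term equal to exactly $[\theta]$.
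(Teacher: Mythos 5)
Your proof is correct and follows essentially the same route as the paper: the identity $\sigma^*\lambda = \lambda + \pi^*\theta$ plus homotopy invariance of $(\pi\circ\varphi_1)^*$ on $H^1$ is exactly the paper's computation. The only cosmetic difference is that the paper uses the single smooth isotopy $t\mapsto \sigma_t\circ\varphi_t$ with $\sigma_t(x,y)=(x,y+t\theta(x))$, which lands on the zero section at $t=0$ and so avoids the concatenation bookkeeping you flag at the end.
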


\begin{proof}
We can connect $\sigma$ to the identity by the smooth family of symplectic fiberwise translation
\[
\sigma_t : T^*M \rightarrow T^*M, \qquad (x,y) \mapsto (x,y+t\theta(x)), \qquad t\in [0,1].
\]
They satisfy
\begin{equation}
\label{eins}
\sigma_t^* \lambda = \lambda + t \pi^* \theta,
\end{equation}
where $\pi: T^*M \rightarrow M$ is the canonical projection. Let
\[
\varphi_t : M \rightarrow T^*M, \qquad t\in [0,1],
\]
be a smooth family of Lagrangian embeddings such that $\varphi_1(M)=\Lambda$ and $\varphi_0$ is the canonical embedding onto the zero-section. Then
\[
\sigma_t \circ \varphi_t : M \rightarrow T^*M, \qquad t\in [0,1],
\]
is a smooth family of Lagrangian embeddings such that $\sigma_1 \circ \varphi_1(M)=\sigma(\Lambda)$ and $\sigma_0 \circ \varphi_0=\varphi_0$ is the canonical embedding onto the zero-section. This shows that $\sigma(\Lambda)$ belongs to $\mathcal{L}(T^*M)$.

In order to compute the Liouville class of $\sigma(\Lambda)$, we set for simplicity $\varphi:= \varphi_1$ and we introduce the maps $\tilde{\varphi}$ and $\tilde{\sigma}$ by the following commutative diagram
\[
\xymatrix{ M \ar[r]^{\tilde{\varphi}} \ar[dr]^{\varphi} & \Lambda \ar[d]^{\imath_{\Lambda}} \ar[r]^{\tilde{\sigma}} & \sigma(\Lambda) \ar[d]^{\imath_{\sigma(\Lambda)}} \\ & T^*M \ar[r]^{\sigma} & T^*M}
\]
where the vertical maps $\imath_{\Lambda}$ and $\imath_{\sigma(\Lambda)}$ are inclusions. By using (\ref{eins}) for $t=1$ we compute:
\[
\begin{split}
\mathrm{Liouville}(\sigma(\Lambda)) &= (\tilde{\sigma} \circ \tilde{\varphi})^* [ \imath_{\sigma(\Lambda)}^* \lambda] = \tilde{\varphi}^* \tilde{\sigma}^* [ \imath_{\sigma(\Lambda)}^* \lambda] = \tilde{\varphi}^* [\tilde{\sigma}^*  \imath_{\sigma(\Lambda)}^* \lambda] \\
&= \tilde{\varphi}^* [\imath_{\Lambda}^* \sigma^* \lambda] = \tilde{\varphi}^* [\imath_{\Lambda}^* (\lambda + \pi^* \theta)] = \tilde{\varphi}^* [\imath_{\Lambda}^* \lambda] +  \tilde{\varphi}^*\imath_{\Lambda}^*\pi^* [\theta]\\
&= \mathrm{Liouville}(\Lambda) + \varphi^* \pi^* [\theta] = \mathrm{Liouville}(\Lambda) + \varphi_0^* \pi^* [\theta] \\ &= \mathrm{Liouville}(\Lambda) + [\theta].
\end{split}
\]
This concludes the proof.
\end{proof}

\section{Rigidity of Lagrangian submanifolds of optical hypersurfaces}
\label{rigiditysec}

A smooth Hamiltonian $H:T^*M\rightarrow \R$ induces the Hamiltonian vector field $X_H$ which is defined by $\imath_{X_H} \omega = - dH$. Its flow $\psi^H$ is symplectic and $H$ is a first integral. Any closed Lagrangian submanifold $\Lambda$ which is entirely contained in a level set of $H$ is automatically invariant for the Hamiltonian flow $\psi^H$.

Here we are interested in Tonelli Hamiltonians, that is, in smooth functions $H: T^*M \rightarrow \R$ which are fiberwise superlinear and whose second fiberwise differential is everywhere positive definite. Let $H$ be such a Tonelli Hamiltonian and let 
\[
\Sigma:= \{ z\in T^*M \mid H(z)=c \}
\]
be a non-empty regular level set of $H$ such that $\pi(\Sigma)=M$, where $\pi: T^*M \rightarrow M$ denotes the canonical projection. Hypersurfaces $\Sigma$ in $T^*M$ which can be obtained in this way are called {\em optical}. The hypersurface $\Sigma$ bounds the precompact open set 
\[
U_{\Sigma} := \{z \in T^*M \mid H(z) < c\}.
\]

The aim of this section is to reprove and discuss the following theorem of Paternain, Polterovich and Siburg, see \cite[Theorem 5.2]{pps03} and Theorem \ref{main2} in the Introduction.

\begin{thm}
\label{bound-rig}
Let $\Sigma$ be an optical hypersurface as above. Let $\Lambda$ be an element of $\mathcal{L}(T^*M)$ contained in $\Sigma$. Assume that  the restricted Hamiltonian flow
\[
\psi^H|_{\R\times \Lambda} : \R\times \Lambda \rightarrow \Lambda
\]
is strongly chain recurrent.
Then if $K$ is an element of $\mathcal{L}(T^*M)$ contained in $\overline{U_{\Sigma}}$ and having the same Liouville class of $\Lambda$, then necessarily $K=\Lambda$.
\end{thm}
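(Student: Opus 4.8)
The plan is to argue by contradiction: assuming $K\neq \Lambda$, I will produce a Lipschitz continuous Lyapunov function for the restricted flow $\psi^H|_{\R\times\Lambda}$ which fails to be a first integral, contradicting the strong chain recurrence of this flow via Proposition \ref{SCR} (the easy direction of Theorem \ref{pierrethm}, which needs no extra hypothesis on the flow). A preliminary, essentially cosmetic, reduction uses Lemma \ref{Lio-shi}: since $\Lambda$ and $K$ carry the same Liouville class $[\theta]\in H^1(M,\R)$, the symplectic fiberwise translation $\sigma$ associated with $-\theta$ makes both $\sigma(\Lambda)$ and $\sigma(K)$ exact, replaces $H$ by the still-Tonelli Hamiltonian $H\circ\sigma^{-1}$ (fiberwise translation preserves fiberwise superlinearity and fiberwise convexity), sends $\overline{U_\Sigma}$ to $\overline{U_{\sigma(\Sigma)}}$, and conjugates the restricted dynamics by a diffeomorphism which is bi-Lipschitz on the compact $\Lambda$, so that strong chain recurrence is preserved. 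Hence I may assume that $\Lambda$ and $K$ are exact.

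The heart of the matter is the construction of the Lyapunov function. Since $\Lambda$ is exact I fix a smooth primitive $S\colon\Lambda\to\R$ with $dS=\imath_\Lambda^*\lambda$. Since $K$ is an exact Lagrangian smoothly isotopic to the zero section, it admits a graph selector, i.e.\ a Lipschitz continuous function $u\colon M\to\R$ such that $(x,du(x))\in K$ at every differentiability point $x$ of $u$; as $K\subset\overline{U_\Sigma}$ this yields $H(x,du(x))\le c$ almost everywhere. I then set
\[
h\colon\Lambda\to\R,\qquad h(z):=u(\pi(z))-S(z).
\]
This $h$ is Lipschitz continuous, being the difference of a smooth function and the composition of the Lipschitz map $u$ with the smooth projection $\pi$. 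To check that it is a Lyapunov function I use Lemma \ref{lipfirstint}(i) and evaluate $dh$ on $X_H$ at a point $z=(x,y)\in\Lambda$ where $u$ is differentiable at $\pi(z)$: writing $v:=\partial_yH(x,y)=d\pi[X_H]$ for the projected velocity and $y_K:=du(x)$, the chain rule gives $d(u\circ\pi)[X_H]=\langle y_K,v\rangle$, while $dS[X_H]=\lambda(X_H)=\langle y,v\rangle$, so that $dh(X_H)(z)=\langle y_K-y,\,v\rangle$. Fiberwise convexity of $H$ in the form $H(x,y_K)\ge H(x,y)+\langle\partial_yH(x,y),y_K-y\rangle=c+\langle v,y_K-y\rangle$, together with $H(x,y_K)\le c$, forces $\langle v,y_K-y\rangle\le 0$, hence $dh(X_H)\le 0$ almost everywhere and $h$ is a Lyapunov function.

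With $h$ at hand, strong chain recurrence of $\psi^H|_{\R\times\Lambda}$ and Proposition \ref{SCR} force $h$ to be a first integral, so $dh(X_H)=0$ almost everywhere on $\Lambda$ by Lemma \ref{lipfirstint}(ii). At such a point the identity above reads $\langle v,y_K-y\rangle=0$, whence the fiberwise inequality becomes $H(x,y_K)\ge c$; combined with $H(x,y_K)\le c$ this gives equality, and the \emph{strict} fiberwise convexity of the Tonelli Hamiltonian forces $y=y_K=du(x)$. Thus almost every $z=(x,y)\in\Lambda$ satisfies $(x,y)=(x,du(x))\in K$, so $\Lambda\cap K$ is dense in $\Lambda$; since $K$ is closed this gives $\Lambda\subseteq K$, and as $\Lambda$ and $K$ are closed embedded submanifolds of the same dimension we conclude $\Lambda=K$, contradicting $K\neq\Lambda$.

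The step I expect to be the most delicate is the passage from ``$dh(X_H)=0$ almost everywhere on $\Lambda$'' to ``$y=du(x)$ for almost every $(x,y)\in\Lambda$'' (the same subtlety also underlies the almost-everywhere verification that $h$ is Lyapunov). The pointwise identification is immediate wherever $u$ is differentiable at $\pi(z)$, but the null set $N\subset M$ of non-differentiability of the selector must be shown to have $\pi|_\Lambda$-preimage of measure zero in $\Lambda$: away from the caustic $\pi|_\Lambda$ is a local diffeomorphism and this is clear, so the genuine content is to control the behaviour along the critical set of the projection. The second point requiring care is the very existence of a graph selector $u$ for $K$ with the pointwise property $(x,du(x))\in K$ and the constraint $H(x,du(x))\le c$; this is the standard generating-function (or weak KAM) input, and it is precisely where the hypotheses that $K$ lies in $\mathcal L(T^*M)$ and is contained in $\overline{U_\Sigma}$ enter decisively.
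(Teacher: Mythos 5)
There is a genuine gap, and it is exactly at the point you yourself flag as ``the most delicate step'': your argument never establishes that $\Lambda$ is a graph over $M$, and without that the almost-everywhere reasoning on $\Lambda$ does not go through. The paper's proof begins by invoking Arnaud's generalization of Birkhoff's theorem (Theorem \ref{arnaud}): an element of $\mathcal{L}(T^*M)$ contained in a level set of a Tonelli Hamiltonian is automatically the image $\theta(M)$ of a closed one-form. This is the ingredient your proposal is missing. After the fiberwise translation it lets one take $\Lambda=\mathcal{O}$, so that $\pi|_\Lambda$ is a diffeomorphism, the restricted flow is the flow of a vector field $Y$ on $M$, and the graph selector $\Phi$ of $\sigma^{-1}(K)$ is directly a Lipschitz function on $M\cong\Lambda$ whose differentiability set has full measure \emph{in $\Lambda$}. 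In your version, where $\Lambda$ is only known to be an exact Lagrangian inside $\Sigma$, the set $\set{z\in\Lambda}{\pi(z)\notin M_0}$ need not be null in $\Lambda$: the critical set of $\pi|_\Lambda$ can a priori have positive measure (a Lagrangian can contain open pieces of cotangent fibers, collapsed by $\pi$ to a null set of $M$, which could meet the non-differentiability set of the selector). Consequently neither the verification that $dh(X_H)\le 0$ a.e.\ on $\Lambda$ (needed to apply Lemma \ref{lipfirstint}(i)) nor the final deduction that a.e.\ point of $\Lambda$ lies on $K$ is justified as written. The fix is not a technicality you can supply by ``controlling the behaviour along the caustic'': it is precisely Arnaud's theorem.

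Once that theorem is added, your argument collapses onto the paper's: after reducing to $\Lambda=\mathcal{O}$ your function $h=u\circ\pi-S$ is (up to an additive constant) the graph selector $\Phi$ itself, and your convexity inequality $\langle v,\,y_K-y\rangle\le 0$ with equality iff $y_K=y$ is exactly the paper's decomposition $\tilde{H}(x,y)=\langle y,Y(x)\rangle+F(x,y)$ with $F\ge 0$ vanishing only on the zero section. The remaining steps (Proposition \ref{SCR} to force $h$ to be a first integral, Lemma \ref{lipfirstint}(ii), the equality case of strict convexity, and the density-plus-closedness conclusion $K=\Lambda$) are correct and match the paper.
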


In order to prove this theorem, we shall make use of {\em graph selectors}. These are  Lipschitz functions on $M$ whose properties are described in the following existence theorem:

\begin{thm} \label{GS} \textnormal{[Existence of graph selectors]} 
Assume that $\Lambda \in \mathcal{L}(T^*M)$ is exact. Then there exists a Lipschitz function $\Phi: M \to \mathbb{R}$ which is smooth on an open set $M_0 \subset M$ of full measure and such that 
\[
(x,d\Phi(x)) \in \Lambda,
\]
for every $x\in M_0$.
\end{thm}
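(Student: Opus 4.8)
The plan is to reduce the statement to the theory of generating functions and then to extract $\Phi$ by a minimax procedure. First I would invoke the existence of a \emph{generating function quadratic at infinity} (GFQI) for $\Lambda$: since $\Lambda$ lies in $\mathcal{L}(T^*M)$, i.e. it is Lagrangian isotopic to the zero section, a theorem of Sikorav (building on work of Laudenbach--Sikorav and Chaperon) yields a smooth function $S: M\times\R^N\to\R$, defined on a trivial fibre bundle over $M$, such that: (a) $S$ coincides with a fixed nondegenerate quadratic form $Q(\xi)$ outside a compact set; (b) $0$ is a regular value of the fibre derivative $\partial_\xi S$, so the fibre-critical set $\Sigma_S:=\{(x,\xi):\partial_\xi S(x,\xi)=0\}$ is a smooth $n$-dimensional submanifold; and (c) the map $(x,\xi)\mapsto(x,\partial_x S(x,\xi))$ restricts to a diffeomorphism of $\Sigma_S$ onto $\Lambda$. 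Exactness of $\Lambda$ is precisely what makes the function $\Phi$ produced below \emph{single-valued}: if $(x,d\Phi(x))\in\Lambda$ on a set of full measure then $\imath_\Lambda^*\lambda$ pulls back to $d\Phi$, forcing its cohomology class to vanish, and conversely the minimax below produces a genuine real-valued function only because the Liouville class is zero.

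Next I would define $\Phi$ by minimax. For each $x$ write $f_x:=S(x,\cdot)$. Because $f_x$ is quadratic at infinity with a fixed quadratic form $Q$ of index $i$, for $a$ sufficiently negative and $b$ sufficiently positive the relative homology $H_i(\{f_x\le b\},\{f_x\le a\})$ is infinite cyclic, generated by the descending disk of $Q$; call this generator $\alpha$. Set
\[
\Phi(x):=\inf\Bigl\{\lambda\in\R : \alpha\in\operatorname{im}\bigl(H_i(\{f_x\le\lambda\},\{f_x\le a\})\to H_i(\{f_x\le b\},\{f_x\le a\})\bigr)\Bigr\}.
\]
Standard minimax theory — the deformation lemma, with the quadratic-at-infinity condition supplying the required compactness — shows that $\Phi(x)$ is a \emph{critical value} of $f_x$, realised by some $\xi_0=\xi_0(x)$ with $\partial_\xi S(x,\xi_0)=0$, hence $(x,\xi_0)\in\Sigma_S$. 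Moreover the minimax value is $1$-stable under the sup norm, $|\Phi(x)-\Phi(x')|\le\sup_\xi|S(x,\xi)-S(x',\xi)|$ over the relevant compact region, and since $S$ is smooth this shows that $\Phi$ is Lipschitz on the compact manifold $M$.

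I would then establish the graph relation at points of differentiability. If $\Phi$ is differentiable at $x_0$ and $\xi_0$ realises the minimax level $\Phi(x_0)$, a first-variation (envelope) argument gives $d\Phi(x_0)=\partial_x S(x_0,\xi_0)$: along any local branch $x\mapsto(x,\xi(x))$ of $\Sigma_S$ one has $\frac{d}{dx}S(x,\xi(x))=\partial_x S+\partial_\xi S\cdot\frac{d\xi}{dx}=\partial_x S$ because $\partial_\xi S=0$ on $\Sigma_S$. Combined with property (c) this yields $(x_0,d\Phi(x_0))=(x_0,\partial_x S(x_0,\xi_0))\in\Lambda$. By Rademacher's theorem the Lipschitz function $\Phi$ is differentiable almost everywhere, so the graph property already holds a.e.

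The delicate point, and the \textbf{main obstacle}, is to upgrade ``almost everywhere'' to ``on an open set of full measure on which $\Phi$ is smooth.'' Here I would apply Sard's theorem to the projection $\pi|_\Lambda:\Lambda\to M$, a smooth map between $n$-manifolds: its critical values (the caustic $\mathcal{C}$) form a closed set of measure zero. Over $M\setminus\mathcal{C}$ the projection is a local diffeomorphism, so $\Lambda$ is locally a finite disjoint union of graphs $\{(x,d\phi_j(x))\}$ of smooth functions $\phi_j$, and $\Phi$ locally coincides with a minimax selection among the $\phi_j$. The remaining difficulty is to discard the \emph{conflict set}, where the minimax jumps from one branch to another: one must show this set is contained in the non-differentiability locus of $\Phi$ and hence is negligible, so that $M_0:=M\setminus(\mathcal{C}\cup\{\text{conflict set}\})$ is open, of full measure, and near each of its points $\Phi$ agrees with a single smooth branch $\phi_j$, making it smooth there with $(x,d\Phi(x))=(x,d\phi_j(x))\in\Lambda$. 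Proving that this conflict set — equivalently, the non-smooth locus of the minimax selector — is genuinely of measure zero is the step requiring the most care.
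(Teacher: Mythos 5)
The paper does not actually prove Theorem \ref{GS}: it is quoted as a known result and the proof is deferred to \cite{cha91}, \cite{oh97} and \cite{sib04}, so there is no internal argument to measure yours against. Your outline is essentially the standard proof from those references (generating function quadratic at infinity, homological minimax, stability of the minimax value in the sup norm, envelope argument at points of differentiability), and that is the right route. One preliminary correction: Sikorav's existence theorem for GFQI applies to Lagrangians \emph{Hamiltonian} isotopic to the zero section, whereas membership in $\mathcal{L}(T^*M)$ only provides a Lagrangian isotopy. Exactness of $\Lambda$ is needed already here, not only for single-valuedness of $\Phi$: one corrects the isotopy $\varphi_t$ by fiberwise translations by closed one-forms representing $-\mathrm{Liouville}(\varphi_t(M))$ (which vanish at $t=0,1$) to make it an exact, hence Hamiltonian, isotopy before invoking Sikorav.

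The step you flag as the main obstacle is indeed the one that needs care, but the mechanism you propose for it is not the right one. The conflict set is \emph{not} in general contained in the non-differentiability locus of $\Phi$ (a point can lie in $A_i\cap A_j:=\{\Phi=g_i\}\cap\{\Phi=g_j\}$ while $\Phi$ is differentiable there, e.g.\ if one of the two sets is thin near that point), so this containment cannot be the source of negligibility. The correct and in fact simpler observation is the following. Over a small ball $U$ in the complement of the caustic, $\pi|_{\Lambda}$ is a finite covering (it is a proper local diffeomorphism), so $\Lambda$ splits into finitely many \emph{disjoint} graphs $\{(x,dg_j(x))\}$, and disjointness means $dg_i(x)\neq dg_j(x)$ for \emph{every} $x\in U$ and $i\neq j$. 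Hence each coincidence locus $\{g_i=g_j\}$ is a regular level set of $g_i-g_j$, i.e.\ a smooth hypersurface, in particular closed in $U$ and of measure zero. Since $\Phi(x)$ is a critical value of $S(x,\cdot)$, it belongs to $\{g_1(x),\dots,g_k(x)\}$ pointwise; off the union of these hypersurfaces the values $g_j(x)$ are pairwise distinct, so by continuity $\Phi$ coincides with a single branch on a neighbourhood and is smooth there with $(x,d\Phi(x))=(x,dg_j(x))\in\Lambda$. Taking $M_0$ to be the complement of the caustic union these hypersurfaces gives the open set of full measure directly, without Rademacher or any density-point argument. With these two repairs your sketch becomes a complete proof along the lines of the cited references.
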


We refer to \cite{cha91}, \cite{oh97} and \cite{sib04} for the proof of this theorem and for an extensive study of graph selectors. The next tool that we need is the following generalization of a theorem of Birkhoff, which is due to Arnaud.

\begin{thm}[\cite{arn10}] 
\label{arnaud}
Let $\Lambda$ be an element of $\mathcal{L}(T^*M)$. If $\Lambda$ is invariant with respect to the flow of a Tonelli Hamiltonian, then there exists a closed one-form $\theta$ on $M$ such that $\Lambda=\theta(M)$.
\end{thm}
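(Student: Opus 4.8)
The plan is to reduce the statement to the single geometric fact that the base projection $\pi|_\Lambda : \Lambda \to M$ is a diffeomorphism. Granting this, $\Lambda$ is the graph of the section $\theta$ defined by $(x,\theta(x)) := (\pi|_\Lambda)^{-1}(x)$, i.e.\ a one-form on $M$ with $\Lambda=\theta(M)$; and since $\Lambda$ is Lagrangian the closed form $\imath_\Lambda^*\lambda$ pulls back under this graph identification to $\theta$ itself, so $\theta$ is closed, as required. Everything therefore reduces to the transversality statement that at every $z\in\Lambda$ the tangent space $T_z\Lambda$ is transverse to the vertical subspace $V_z := \ker d\pi_z$: both spaces have dimension $n=\dim M$, so transversality is equivalent to $d\pi_z|_{T_z\Lambda}$ being an isomorphism, hence to $\pi|_\Lambda$ being a local diffeomorphism.

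The mechanism that produces this transversality is the interaction between the invariance of $\Lambda$ and the fiberwise convexity of $H$. Writing the linearized Hamiltonian flow in local cotangent coordinates $(x,y)$ for a variation $(\xi,\eta)=(\delta x,\delta y)$, one has $\dot\xi = H_{yx}\xi + H_{yy}\eta$ and $\dot\eta = -H_{xx}\xi - H_{xy}\eta$, where the fiberwise Hessian $H_{yy}$ is positive definite by the Tonelli assumption. This yields the positive twist property: a nonzero vertical vector ($\xi=0$, $\eta\neq 0$) has $\dot\xi = H_{yy}\eta\neq 0$, so it instantly leaves the vertical, and $\langle\dot\xi,\eta\rangle = \langle H_{yy}\eta,\eta\rangle>0$ records a definite direction of rotation of Lagrangian planes relative to $V$. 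Equivalently, whenever the Lagrangian plane $T_{\psi_t z}\Lambda$ is transverse to the vertical it is the graph $\eta=S(t)\xi$ of a symmetric matrix solving the matrix Riccati equation $\dot S = -H_{xx} - H_{xy}S - S H_{yx} - S H_{yy}S$, in which the quadratic term carries the positive-definite coefficient $H_{yy}$.

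The technical heart is to upgrade this infinitesimal twist, together with the compactness and invariance of $\Lambda$, into \emph{global} transversality of $T\Lambda$ to $V$. Since $T\Lambda$ is an invariant Lagrangian subbundle of $T(T^*M)|_\Lambda$ under the linearized flow, I would construct the two Green subbundles $G_-\subseteq G_+$ as the limits of the images of the vertical subbundle transported from times $t\to\pm\infty$; the positive twist and a Riccati comparison (a maximum-principle argument exploiting $H_{yy}>0$) guarantee that these limits exist, are invariant Lagrangian subbundles transverse to $V$, and trap every invariant Lagrangian subbundle between them, so that $G_-\subseteq T\Lambda\subseteq G_+$. Transversality of $G_\pm$ to the vertical then forces $T\Lambda$ to be transverse to $V$ at every point. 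I expect this step --- the existence of the Green bundles, i.e.\ the absence of conjugate points along the orbits of $\Lambda$, and the squeezing of $T\Lambda$ between them --- to be the main obstacle, since it is precisely where the convexity of the Tonelli Hamiltonian is indispensable; this is the substance of Arnaud's work in \cite{arn10}.

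Finally, once $\pi|_\Lambda$ is known to be a local diffeomorphism, I would conclude by a covering argument: $\pi|_\Lambda$ is a submersion between compact $n$-manifolds, hence open, and proper, hence a covering map onto the connected manifold $M$. Its degree equals that of $\pi$ restricted to the zero section $\mathcal{O}$, namely one, because $\Lambda$ is Lagrangian-isotopic to $\mathcal{O}$ and the degree is a homotopy invariant of the family $\pi\circ\varphi_t$. A degree-one covering of a connected manifold is a diffeomorphism, which completes the reduction and delivers the closed one-form $\theta$ with $\Lambda=\theta(M)$.
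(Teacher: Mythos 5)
Your proposal does not follow the paper's route, and the route it does follow has a genuine gap at its central step. For context: the paper does not reprove Arnaud's theorem at all. It takes the exact case of \cite{arn10} as a black box (if $\Lambda$ is exact, invariant under a Tonelli flow, and Lagrangian-isotopic to the zero section, then $\Lambda=du(M)$) and reduces the general closed case to it by the fiberwise symplectic translation $\sigma(x,y)=(x,y-\theta(x))$, where $[\theta]=\mathrm{Liouville}(\Lambda)$: by Lemma \ref{Lio-shi} the submanifold $\sigma(\Lambda)$ is exact, it is invariant under the flow of $H\circ\sigma^{-1}$, which is still Tonelli, so $\sigma(\Lambda)=du(M)$ and $\Lambda=(du+\theta)(M)$. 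Your proposal instead tries to prove the whole statement by dynamical means; the reduction to graphs via transversality of $T_z\Lambda$ to the vertical, and the final covering/degree argument showing that a local diffeomorphism $\pi|_\Lambda$ of degree one is a diffeomorphism, are both fine. The problem is the step that produces the transversality.

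The gap is your claim that ``the positive twist and a Riccati comparison guarantee that these limits exist,'' i.e.\ that the Green bundles $G_\pm$ exist along every orbit in $\Lambda$. Positive twist ($H_{yy}>0$) alone does not give this: the Green bundles exist along an orbit if and only if the orbit has \emph{no conjugate points}, and Tonelli flows generically have conjugate points in abundance (the geodesic flow of the round sphere has $H_{yy}>0$ everywhere, yet along every orbit the transported verticals fail to converge because they repeatedly cross the vertical). What actually rules out conjugate points for the orbits lying on an invariant Lagrangian submanifold isotopic to the zero section is that these orbits are globally action-minimizing, and proving that minimizing property for a $\Lambda$ which is \emph{not yet known to be a graph} is precisely the content of Arnaud's argument (which goes through graph selectors and generating functions quadratic at infinity, not through an a priori Riccati estimate). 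The trapping statement $G_-\subseteq T\Lambda\subseteq G_+$ likewise holds only once the no-conjugate-point/minimizing property is in hand. So the one step you defer to \cite{arn10} is not a technical lemma but essentially the theorem itself; as written, the proposal is a plan whose load-bearing beam is missing.
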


Actually, in Arnaud's paper this theorem is proved for exact Lagrangians (in this case, the one-form $\theta$ is exact). The general case can be easily reduced to this special one by using a symplectic fiberwise translation, as we now explain.

\begin{proof}[Proof of Theorem \ref{arnaud}]
Let $a\in H^1(M,\R)$ be the Liouville class of $\Lambda$ and let $\theta$ be a closed one-form on $M$ whose cohomology class is $a$. Consider the symplectic translation
\[
\sigma: T^*M \rightarrow T^* M , \qquad (x,y) \mapsto (x,y-\theta(x)).
\]
By Lemma \ref{Lio-shi} the Lagrangian submanifold $\sigma(\Lambda)$ belongs to  $\mathcal{L}(T^*M)$ and has Liouville class
\[
\mathrm{Liouville}(\sigma(\Lambda)) = \mathrm{Liouville}(\Lambda) - [\theta] = a - [\theta] = 0.
\]
The Lagrangian submanifold $\sigma(\Lambda)$ is invariant with respect to the Hamiltonian flow of $H\circ \sigma^{-1}$, which is still Tonelli because $\sigma$ is a fiberwise translation. By the original version of Arnaud's theorem, there exists a smooth real function $u$ on $M$ such that 
$\sigma(\Lambda) = du(M)$. But then
\[
\begin{split}
\Lambda &= \sigma^{-1} (\sigma(\Lambda)) = \sigma^{-1} (du(M)) = \{(x,y+\theta(x)) \mid (x,y)\in du(M)\} \\
&= \{ (x,du(x) + \theta(x)) \mid x\in M\} = (du+\theta)(M).
\end{split}
\]
We conclude that $\Lambda$ is the image of the one-form $du+\theta$.
\end{proof}  

We can now prove Theorem \ref{bound-rig}.

\begin{proof}[Proof of Theorem \ref{bound-rig}] Let $H$ be a Tonelli Hamiltonian on $T^*M$ such that $\Sigma=H^{-1}(c)$. Since $\Lambda$ is contained in $H^{-1}(c)$, it is invariant with respect to the Hamiltonian flow of $H$. Then Theorem \ref{arnaud} implies the existence of a closed one-form $\theta$ on $M$ such that
\[
\Lambda = \theta(M).
\]
The symplectic fiberwise translation
\[
\sigma: T^*M \rightarrow T^* M , \qquad (x,y) \mapsto (x,y+\theta(x)),
\]
brings the zero-section $\mathcal{O}$ onto $\Lambda$. Moreover, $\sigma$ conjugates the Hamiltonian flow of $H$ with the Hamiltonian flow of
\[
\tilde{H} := H \circ \sigma - c,
\]
meaning that:
\[
\psi_t^H \circ \sigma = \sigma \circ \psi_t^{\tilde{H}} \qquad \forall t\in \R.
\]
The Hamiltonian $\tilde{H}$ is also Tonelli and vanishes on $\mathcal{O}$. Consider the vector field $Y$ on $M$ which is defined as
\[
Y(x) := d_y \tilde{H}(x,0)\in T_x M \qquad \forall x\in M,
\]
where $d_y$ denoted the fiberwise differential. Since the zero section $\mathcal{O}=\sigma^{-1}(\Lambda)$ is invariant with respect to $\psi^{\tilde{H}}$, the Hamiltonian vector field $X_{\tilde{H}}$ is tangent to $\mathcal{O}$ and coincides with $Y$ once $\mathcal{O}$ and $M$ are canonically identified. Moreover, a Taylor expansion at $(x,0)$ shows that we can write $\tilde{H}$ as
\[
\tilde{H}(x,y) = \langle y, Y(x) \rangle + F(x,y),
\]
where $\langle\cdot,\cdot \rangle$ denoted the duality pairing and the smooth function $F$ vanishes up to order one on the zero section. Being convex, $F|_{T_x^* M}$ achieves its minimum at the critical point $0$, where we have $F(x,0)=0$. We conclude that $F\geq 0$ on $T^*M$.  

Now let $K$ be an element of $\mathcal{L}(T^*M)$ contained in
\[
\overline{U_{\Sigma}} = \{ z\in T^*M \mid H(z) \leq c\}
\] 
and having the same Liouville class of $\Lambda$. The Lagrangian submanifold $\sigma^{-1}(K)$ is contained in 
\[
\{z\in T^*M \mid \tilde{H}(z) \leq 0\}
\]
and has Liouville class zero. The first fact implies that it admits a graph selector $\Phi:M \rightarrow \R$ as in Theorem \ref{GS}. The second one guarantees that
\begin{equation}
\label{sotto}
\tilde{H}(x,d\Phi(x)) = d\Phi(x)[Y(x)]  + F(x,d\Phi(x)) \leq 0 \qquad \mbox{for a.e. } x\in M.
\end{equation}
Since $F$ is non-negative, we have
\[
d\Phi\circ Y  \leq 0 \qquad \mbox{a.e.},
\]
and Lemma \ref{lipfirstint} implies that $\Phi$ is a Lyapunov function for the flow of $Y$. By identifying $M$ with the zero section $\mathcal{O}$, we obtain that $\Phi$ is a Lyapunov function for $\psi^{\tilde{H}}|_{\R\times \mathcal{O}}$, and hence that $\Phi\circ \sigma^{-1}$ is a Lipschitz Lyapunov function for $\psi^H|_{\R\times \Lambda}$. 

By Theorem \ref{pierrecont}, the hypothesis on the restriction of the flow $\psi^H$ to $\Lambda$ is equivalent to the fact that every Lipschitz continuous Lyapunov function for $\psi^H|_{\R \times \Lambda}$ is a first integral (here we just need the simpler implication which is given in Proposition \ref{SCR}). Therefore, $\Phi\circ \sigma^{-1}$ must be a first integral of $\psi^H|_{\R\times \Lambda}$, and hence $\Phi$ is a first integral of $\psi^{\tilde{H}}|_{\R\times \mathcal{O}}$. By Lemma \ref{lipfirstint} we deduce that
\[
d\Phi\circ Y  = 0 \qquad \mbox{a.e.},
\]
and (\ref{sotto}) implies that $F(x,d\Phi(x))\leq 0$ for almost every $x\in M$. By the properties of $F$, $d\Phi=0$ a.e.. Then $\sigma^{-1}(K)$ coincides with the zero section on a subset of full measure but, being a smooth submanifold, it coincides with the zero section everywhere: $\sigma^{-1}(K)=\mathcal{O}$. It follows that $K = \sigma(\mathcal{O}) = \Lambda$, as we wished to prove.
\end{proof}

The rigidity phenomenon for Lagrangian submanifolds of optical hypersurfaces does not hold true under the weaker hypothesis that all points of $\Lambda$ are chain recurrent for the restriction of the Hamiltonian flow. This fact has already been observed by Pageault in \cite[Remark 5.8]{pag13} (see also \cite{ffr09}) and follows from the fact that one can find gradient flows with a chain recurrent dynamics. Here is the construction:

\begin{ex} \textnormal{On a closed manifold $M$ of dimension at larger than $n$ one can find a $C^n$ function $h:M\rightarrow \R$ which is not constant and whose critical set is connected. An explicit example is constructed for $n=1$ in the already mentioned \cite{pag13}. If $h$ is such a function with $n\geq 3$, we define the Tonelli Hamiltonian of class $C^{n-1}$
\[
H: T^* M \rightarrow \R, \qquad H(x,y) = \frac{1}{2} \|y\|^2 - \langle y,\nabla h(x) \rangle,
\]
where the norm $\|\cdot\|$ on $T^*M$ and the gradient operator $\nabla$ are induced by some Riemannian metric on $M$.
The value $0$ is regular for $H$ and the zero section $\mathcal{O}$ is a Lagrangian submanifold of the optical hypersurface $\Sigma = H^{-1}(0)$. The restriction of the Hamiltonian flow of $H$ to $\mathcal{O}$ is the flow of the vector field $-\nabla h$, after the canonical identification of $\mathcal{O}$ with $M$. The fact that the critical set of $h$ is connected implies that every point is chain recurrent for the flow of $-\nabla h$, see \cite[Lemma 5.6]{pag13}. Therefore, every point of the exact Lagrangian submanifold $\mathcal{O}\subset \Sigma$ is chain recurrent for $\psi^H_{\R \times \mathcal{O}}$. Since
\[
H(x,dh(x)) = \frac{1}{2} \|dh(x)\|^2 - dh(x)[\nabla h] = - \frac{1}{2} \|dh(x)\|^2\leq 0 \qquad \forall x\in M,
\]
$\Lambda:= dh(M)\neq \mathcal{O}$ is a another exact Lagrangian submanifold (of class $C^{n-1}$) which is contained in $U_{\Sigma}$. 
\hfill $\Box$}
\end{ex}
\begin{rem}
\textnormal{Under the assumptions of Theorem \ref{bound-rig}, the energy value $c$ coincides with $\alpha(\eta)$, where $\eta\in H^1(M,\R)$ is the Liouville class of $\Lambda$ and $\alpha: H^1(M,\R) \rightarrow \R$ is Mather's $\alpha$-function. Indeed, this follows from the Hamiltonian characterization of the $\alpha$-function (see \cite{cipp98}[Theorem A] and also \cite{CIlibro}[Theorem 4-4.1], \cite{sor15}[Chapter 6]) thanks to the fact that $\Lambda$ is the image of a closed one-form (by Arnaud's Theorem \ref{arnaud}) and that no cohomologous closed one-form has image contained in $\{z\in T^*M \mid H(z)<c\}$ (by the conclusion of Theorem \ref{bound-rig}). Moreover, in this case the image of $\Lambda$ by the Legendre transform $T^*M \rightarrow TM$ associated to $H$ is the Aubry set corresponding to the cohomology class $\eta$. This follows from Fathi's characterisation of the Aubry set (see \cite{FS04}[Theorem 1.4] and also \cite{Falibro}[Section 8.5], \cite{FGS}[Section 3]).}
\end{rem}

According to Theorem \ref{bound-rig}, whenever there exists a Lagrangian submanifold $\Lambda \in \mathcal{L}(T^*M)$ which lies in an optical hypersurface $\Sigma$ and whose dynamics is strongly chain recurrent, we cannot find any other one \textit{inside} $\overline{U_{\Sigma}}$ with the same Liouville class. However, we could find one \textit{outside} $U_{\Sigma}$, as this example shows. 

\begin{figure}[here]
\begin{center}
\scalebox{.3}{\includegraphics
{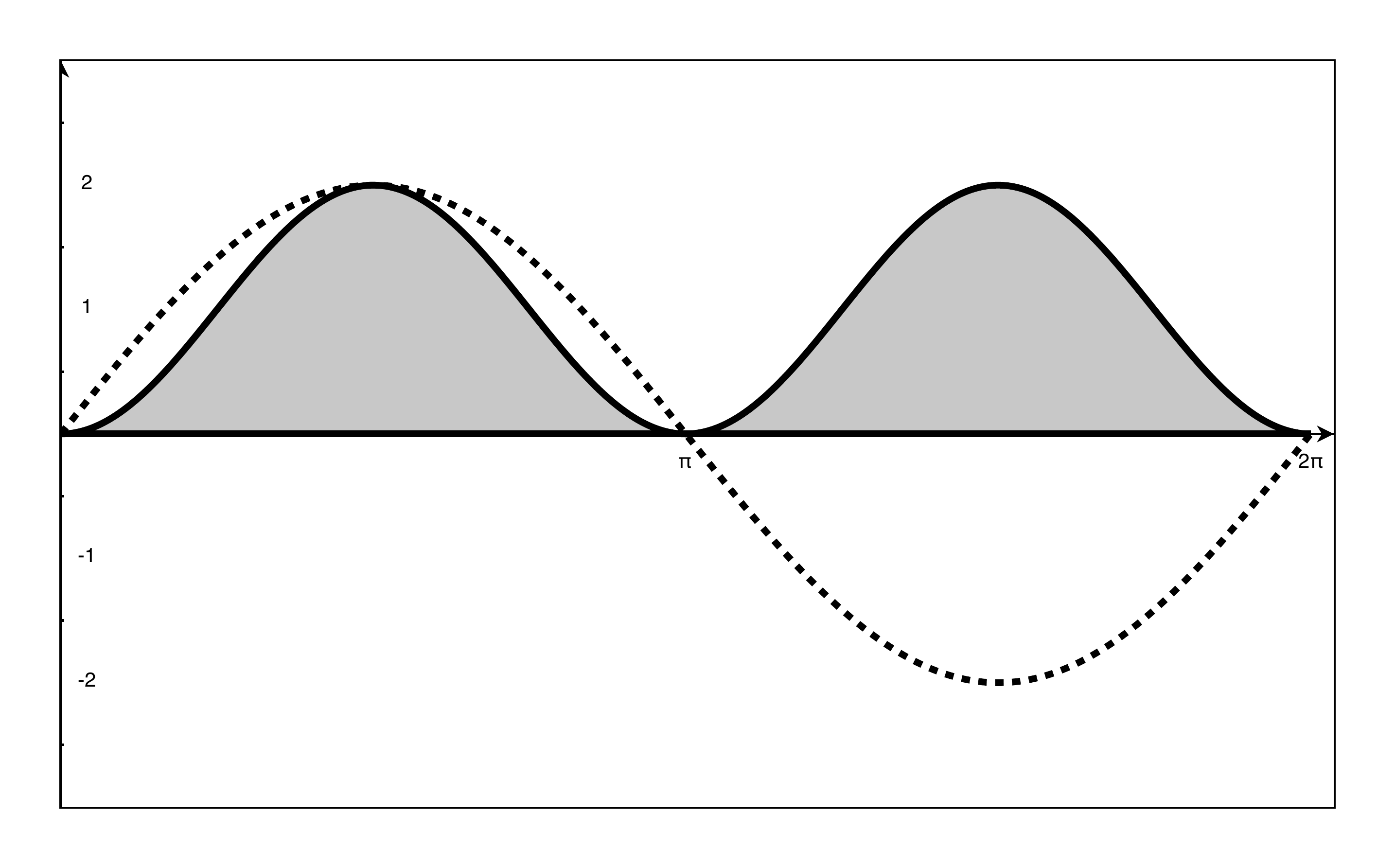}}
\caption{Deformations outside $U_{\Sigma}$ are possible for the Tonelli Hamiltonian (\ref{primo esempio})}
\end{center}
\end{figure}

\begin{ex} \label{no rigidity} 
\textnormal{Let $H: T^*\T^2 \to \R$ be the Tonelli Hamiltonian
\begin{equation} \label{primo esempio}
H(x_1,x_2,y_1,y_2) = y_1^2 + y_2^2 - (1-\cos 2x_1)y_1 - y_2.
\end{equation}
It admits $0$ as regular value and we consider the exact Lagrangian  submanifold given by the zero section $\mathcal{O}$ of $T^*\mathbb{T}^2$, which lies in $\Sigma = H^{-1}(0)$. The dynamics on $\mathcal{O}$ is described by the decoupled system
\[
\begin{cases}
\dot{x}_1 = -(1-\cos 2x_1), \\
\dot{x}_2 = -1.
\end{cases}
\]
The fact that both right-hand sides of these equations do not change sign and that their zeros are isolated easily implies that the restriction of the dynamics on $\mathcal{O}$ is strongly chain recurrent.
However, taking for example 
\[
u(x_1, x_2) = -2\cos x_1,
\] 
we easily check that the exact Lagrangian $du(\T^2)$ lies outside $U_{\Sigma}$. Indeed 
\begin{eqnarray*}
H(x_1,x_2,\partial_{x_1}u,\partial_{x_2}u) &=& H(x_1,x_2,2\sin x_1,0) \\
&=& 4 \sin^2 x_1 - 2 \sin x_1(1 - \cos 2x_1) \\  
&=& 4 \sin^2 x_1(1 - \sin x_1) \ge 0
\end{eqnarray*}
We refer also to Figure 1 where the set
\[
\{(x_1,y_1) \in \T \times \R \mid y_1^2 - (1-\cos 2x_1)y_1 = 0\}
\]
and the related sub- and super-level sets
\[
\{(x_1,y_1) \in \T \times \R \mid y_1^2 - (1-\cos 2x_1)y_1 \le 0\}
\]
and
\[
\{(x_1,y_1) \in \T \times \R \mid y_1^2 - (1-\cos 2x_1)y_1 \ge 0\}
\]
are indicated by the unbroken line and the colors grey and white respectively. Moreover, 
\[
du(\T^2) = \{(x_1,x_2,2\sin x_1,0) \mid (x_1,x_2) \in \T^2 \}
\]
is here represented by a dotted line. This example can be easily generalized to higher dimension.}
\end{ex}

\section{Outer infinitesimal rigidity}

Let $\Sigma:=H^{-1}(c)\subset T^*M$ be an optical hypersurface, and let 
\[
U_{\Sigma} := \{z \in T^*M \mid H(z) < c \}
\]
be the precompact open set bounded by $\Sigma$. Let $\Lambda \in \mathcal{L}(T^*M)$ be a Lagrangian submanifold lying in $\Sigma \subset T^*M$. As we have seen in Example \ref{no rigidity}, it is in general possible to find Lagrangian submanifolds in $\mathcal{L}(T^*M)$ which have the same Liouville class of $\Lambda$ and are fully contained in $U_{\Sigma}^c = T^*M \setminus U_{\Sigma}$, even when the dynamics of the restriction of $\psi^H$ to $\Lambda$ is strongly chain recurrent. In this example, however, it is not possible to deform continuously $\Lambda$ in the set $U_{\Sigma}^c$ while keeping the Liouville class constant.

The next example shows a case in which this continuous deformation is possible, with the dynamics on $\Lambda$ being recurrent.

\begin{figure}[here]
\centerline{\scalebox{.3}{\includegraphics{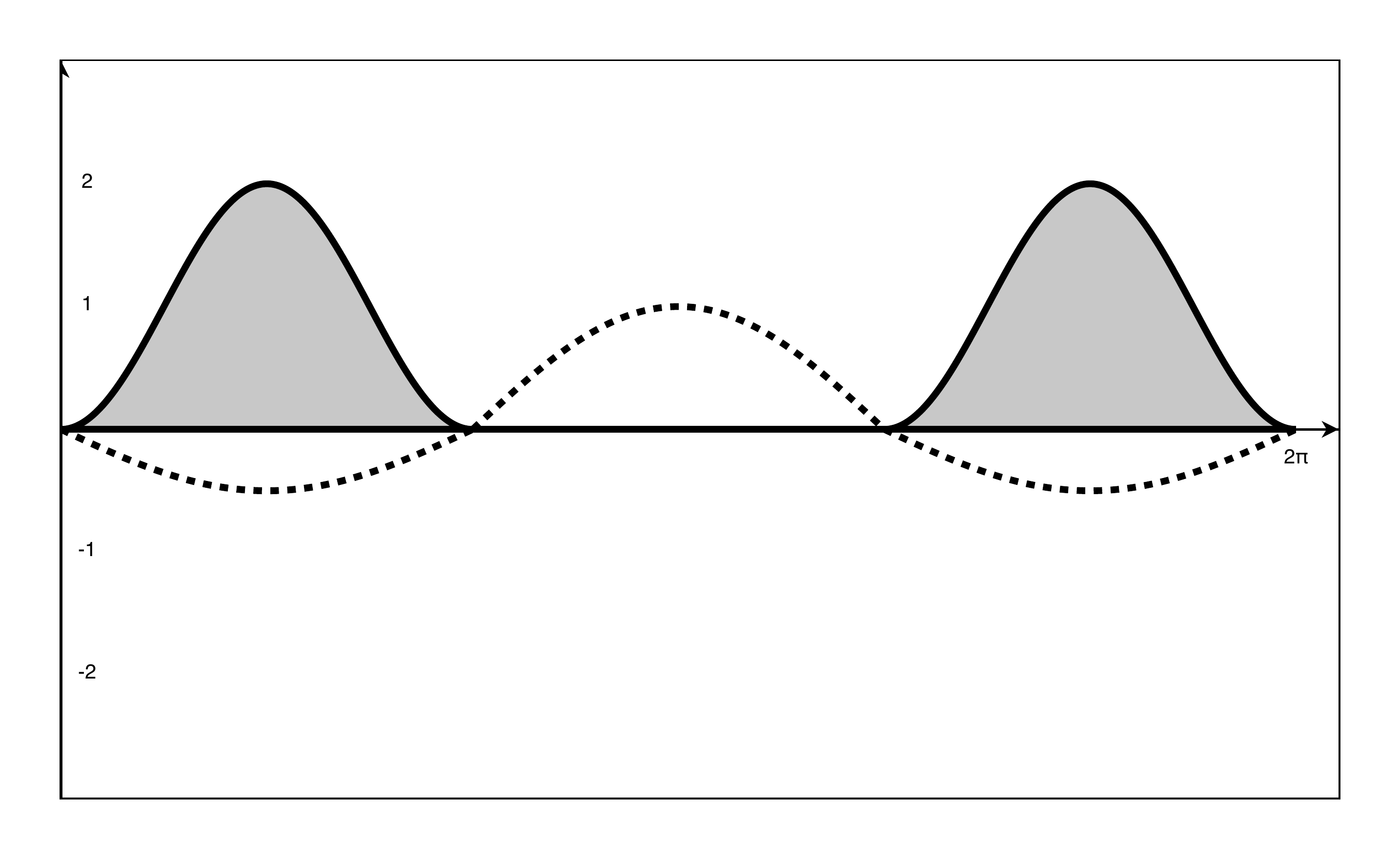}}}
\caption{Sub- and super-level sets for the Tonelli Hamiltonian (\ref{TEe}). An exact Lagrangian graph deforming $\mathcal{O}$ outside $U_{\Sigma}$.} 
\end{figure}

\begin{ex} \label{terzo esempio} 
\textnormal{Let $H:T^*\T^2 \to \R$ be the Tonelli Hamiltonian
\begin{equation}
\label{TEe}
H(x_1,x_2,y_1,y_2) := y_1^2 + y_2^2 - f(x_1)y_1 - y_2,
\end{equation}
where $f:\T \rightarrow \R$ is a non-negative function whose set of zeros is the interval $[1/3,2/3]$. We refer to Figure 2, where the set
\[
\{(x_1,y_1) \in \T \times \mathbb{R} \mid \ y_1^2 - f(x_1)y_1 = 0\}
\]
and the related sub- and super-level sets correspond to the unbroken line and the colors grey and white respectively.
\\
This Hamiltonian admits $0$ as a regular value and the zero section $\mathcal{O}$ of $T^*\mathbb{T}^2$ lies in $\Sigma = H^{-1}(0)$. The dynamics on $\mathcal{O}$ is given by
\[
\begin{cases}
\dot{x}_1 = -f(x_1), \\
\dot{x}_2 = -1.
\end{cases}
\]
The fact that the right-hand sides of the above two equations do not change sign easily implies that every point in $\mathcal{O}$ is chain recurrent. Here, $\mathcal{O}$ can be deformed inside $\widehat{U}_{\Sigma}$ by a non-contstant analytic one-parameter family of exact Lagrangian graphs, see Figure 2. }
\end{ex}
 
The next theorem - a restatement of Theorem \ref{main3} from the Introduction - says that if the dynamics on $\Lambda$ is strongly chain transitive, then every analytic one-parameter deformation of $\Lambda$ within $U_{\Sigma}^c$ with constant Liouville class must be constant. 

\begin{thm} 
Let $\Sigma=H^{-1}(c)$ be an optical hypersurface in $T^*M$. Let $\Lambda$ be an element of $\mathcal{L}(T^*M)$ which is fully contained in $\Sigma$ and such that the restricted Hamiltonian flow
\[
\psi^H|_{\R \times \Lambda} : \R \times \Lambda \rightarrow \Lambda
\]
is strongly chain transitive. Let $\{\Lambda_r\}_{r\in [0,1]}\subset \mathcal{L}(T^*M)$ be an analytic one-parameter family of smooth Lagrangian submanifolds having the same Liouville class of $\Lambda$, such that $\Lambda_0=\Lambda$ and $\Lambda_r \subset U_{\Sigma}^c$ for all $r\in [0,1]$. Then $\Lambda_r = \Lambda$ for all $r\in [0,1]$.
\end{thm}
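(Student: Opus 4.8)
The plan is to reduce to the zero section exactly as in the proof of Theorem \ref{bound-rig} and then, because the relevant inequality now points the \emph{opposite} way, to run an infinitesimal (Taylor) argument in the deformation parameter $r$ rather than extracting a single Lyapunov function from a graph selector. First I would invoke Arnaud's Theorem \ref{arnaud} to write $\Lambda=\theta(M)$ for a closed one-form $\theta$, and use the fiberwise translation $\sigma:(x,y)\mapsto(x,y+\theta(x))$ to carry $\mathcal{O}$ onto $\Lambda$. Setting $\tilde H:=H\circ\sigma-c$ produces a Tonelli Hamiltonian vanishing on $\mathcal{O}$, of the form $\tilde H(x,y)=\langle y,Y(x)\rangle+F(x,y)$ with $F\ge0$ vanishing to second order on the zero section, and the restricted flow $\psi^{\tilde H}|_{\R\times\mathcal{O}}$ is, after identifying $\mathcal{O}$ with $M$, the flow of $Y$, which is strongly chain transitive because it is conjugate to $\psi^H|_{\R\times\Lambda}$. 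By Corollary \ref{coro}, every Lipschitz continuous Lyapunov function for the flow of $Y$ is constant. I would then transport the family by setting $K_r:=\sigma^{-1}(\Lambda_r)$; by Lemma \ref{Lio-shi} each $K_r$ lies in $\mathcal{L}(T^*M)$ with vanishing Liouville class, $K_0=\mathcal{O}$, and $K_r\subset\{\tilde H\ge0\}$ since $\Lambda_r\subset U_{\Sigma}^c$.

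The crucial difference with Theorem \ref{bound-rig} is that now $K_r\subset\{\tilde H\ge0\}$, so a primitive of the graph of $K_r$ satisfies only $d\rho\circ Y\ge-F$ with $F\ge0$, which carries no Lyapunov information; a single $K_r$ is therefore useless. Instead I would exploit that $K_0=\mathcal{O}$ sits on the boundary $\{\tilde H=0\}$ and differentiate in $r$. Near any $r_0$ with $\Lambda_{r_0}=\Lambda$ (initially $r_0=0$), smoothness of the family forces $K_r$ to be $C^1$-close to $\mathcal{O}$, hence an honest graph $K_r=d\rho_r(M)$ of a closed one-form; vanishing Liouville class makes it exact, and $\rho_r$ can be chosen analytic in $r$ with $d\rho_{r_0}=0$. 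Thus no graph selector (Theorem \ref{GS}) is needed, and the constraint reads $\tilde H(x,d\rho_r(x))=d\rho_r(x)[Y(x)]+F(x,d\rho_r(x))\ge0$ for all $x$ and all $r\ge r_0$ near $r_0$, with equality at $r_0$.

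Then I would argue by induction on the order of the first nonconstant Taylor coefficient of $\rho_r$ at $r_0$. Assuming $d\rho^{(1)}=\dots=d\rho^{(k-1)}=0$, the fact that $F=O(|y|^2)$ makes $F(x,d\rho_r)$ of order $(r-r_0)^{2k}$, which does not affect the coefficient of $(r-r_0)^k$, so
\[
\tilde H(x,d\rho_r(x))=\frac{(r-r_0)^k}{k!}\,d\rho^{(k)}(x)[Y(x)]+O\bigl((r-r_0)^{k+1}\bigr).
\]
Since the left-hand side is $\ge0$ as $r\downarrow r_0$ and vanishes at $r_0$, the leading coefficient is nonnegative pointwise, i.e.\ $d\rho^{(k)}\circ Y\ge0$; hence $-\rho^{(k)}$ is a smooth (thus Lipschitz) Lyapunov function for the flow of $Y$ by Lemma \ref{lipfirstint}(i), and by Corollary \ref{coro} it is constant, that is $d\rho^{(k)}=0$. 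Consequently every Taylor coefficient of $d\rho_r$ at $r_0$ vanishes, and analyticity yields $d\rho_r\equiv0$, so $\Lambda_r=\Lambda$ for $r$ in a neighborhood of $r_0$. To globalize I would use an open--closed argument: the set $\{s\in[0,1]:\Lambda_r=\Lambda\ \forall r\in[0,s]\}$ is closed by continuity of the family, and the local step just proved applies at any of its points $r_0<1$, since there $\Lambda_{r_0}=\Lambda$ does carry strongly chain transitive dynamics; this makes the set open to the right, hence equal to $[0,1]$.

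I expect the main obstacle to be twofold. The conceptual one is recognizing that the reversed (super-level) inequality rules out the direct graph-selector argument of Theorem \ref{bound-rig} and forces an infinitesimal analysis, in which the pointwise sign $d\rho^{(k)}\circ Y\ge0$ must be read off the leading Taylor coefficient---this is exactly where the super-level position of $\Lambda_r$, the order-two vanishing of $F$, and the boundary condition $\tilde H|_{\mathcal{O}}=0$ all conspire. The technical one is passing from the local-in-$r$ graph conclusion to the whole interval $[0,1]$, where the genuine analyticity hypothesis (not mere smoothness) and the open--closed continuation are indispensable.
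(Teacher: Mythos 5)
Your proposal is correct and follows essentially the same route as the paper: reduce to the zero section via Arnaud's theorem and a fiberwise translation, write the deformed Lagrangians as exact graphs $du_r(M)$, and read off from the leading Taylor coefficient in $r$ of the inequality $\tilde H(x,du_r(x))\ge 0$ (using that $F\ge 0$ vanishes to second order on the zero section) a non-constant smooth Lyapunov function for the flow of $Y$, contradicting strong chain transitivity via Corollary \ref{coro}. The only difference is organizational: the paper extracts the first non-trivial Taylor coefficient at $r=0$ once and leaves the extension to all of $[0,1]$ to analyticity, whereas you run an induction over the Taylor coefficients and make the open--closed continuation explicit, which is if anything slightly more careful.
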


\begin{proof}
Arguing as at the beginning of the proof of Theorem \ref{bound-rig}, we may assume that $c=0$, that $\Lambda$ coincides with the zero-section $\mathcal{O}$ and that the Hamiltonian has the form 
\[
H(x,y) = \langle y,Y(x) \rangle + F(x,y),
\]
where $Y$ is a smooth vector field on $M$ and $F$ is a smooth non-negative function on $T^*M$ which vanishes on the zero-section. Being non negative, $F$ vanishes up to first order on the zero section. The restriction of the Hamiltonian flow to $\Lambda= \mathcal{O}$ is the flow of the vector field $Y$, after identifying $\mathcal{O}$ with $M$.

The fact that the exact Lagrangian $\Lambda_r$ converges in the $C^1$ topology to the zero section for $r\rightarrow 0$ implies that there exists $r_0\in (0,1]$ such that $\Lambda_r = du_r(M)$ for some analytic one-parameter family of smooth functions $\{u_r\}_{r\in [0,r_0)}$, where $u_0=0$. Since all the $\Lambda_r$'s are contained in 
\[
U_{\Sigma}^c = \{(x,y)\in T^*M \mid H(x,y)\geq 0\},
\] 
we find
\begin{equation}
\label{pos}
0 \leq H(x,du_r(x)) = du_r(x)[Y(x)] + F(x,du_r(x)) \qquad \forall x\in M, \; \forall r\in [0,r_0).
\end{equation}
If, arguing by contradiction, $\Lambda_r$ does not coincide with $\Lambda$ for all $r\in [0,1]$, then the analytic family of functions $u_r$ has the form
\[
u_r = r^h v(x) + r^{h+1} w_r,
\]
for some integer $h\geq 1$, some non-constant smooth function $v$ and some analytic one-parameter family of smooth functions $\{w_r\}_{r\in [0,r_0)}$.
 By plugging the above form into (\ref{pos}) we obtain
\[
0 \leq r^h dv(x)[Y(x)] + F\bigl(x,r^h (dv(x) + r\,dw_r(x))\bigr) \qquad \forall x\in M, \; \forall r\in [0,r_0).
\]
By dividing by $r^h$ we get
\[
dv(x)[Y(x)] \geq - r^{-h} F\bigl(x,r^h (dv(x) + r\,dw_r(x))\bigr),
\]
and by taking a limit for $r\rightarrow 0$, using the fact that $F$ vanishes up to order one on the zero section, we conclude that
\[
dv(x)[Y(x)] \geq 0 \qquad \forall x\in M.
\]
Therefore, $-v$ is a non-constant smooth Lyapunov function for the flow of $Y$. By Corollary \ref{coro}, the flow of $Y$ is not strongly chain transitive (here the easier implication of Proposition \ref{ppp1} suffices). This contradicts our assumption and concludes the proof. 
\end{proof}


\providecommand{\bysame}{\leavevmode\hbox to3em{\hrulefill}\thinspace}
\providecommand{\MR}{\relax\ifhmode\unskip\space\fi MR }
\providecommand{\MRhref}[2]{%
  \href{http://www.ams.org/mathscinet-getitem?mr=#1}{#2}
}
\providecommand{\href}[2]{#2}

\end{document}